\newenvironment{rezabib}
  {\bibdiv\biblist\setupbib}
  {\endbiblist\endbibdiv}
\def\setupbib{\catcode`@=\active}
\def\gatherkey#1#2{\gatherkeyaux{#1}#2\gatherkeyaux}
\def\gatherkeyaux#1#2,#3\gatherkeyaux{\bib{#2}{#1}{#3}}
\def\bbordermatrix#1{\begingroup \m@th
  \@tempdima 4.75\p@
  \setbox\z@\vbox{%
    \def\cr{\crcr\noalign{\kern2\p@\global\let\cr\endline}}%
    \ialign{$##$\hfil\kern2\p@\kern\@tempdima&\thinspace\hfil$##$\hfil
      &&\quad\hfil$##$\hfil\crcr
      \omit\strut\hfil\crcr\noalign{\kern-\baselineskip}%
      #1\crcr\omit\strut\cr}}%
  \setbox\tw@\vbox{\unvcopy\z@\global\setbox\@ne\lastbox}%
  \setbox\tw@\hbox{\unhbox\@ne\unskip\global\setbox\@ne\lastbox}%
  \setbox\tw@\hbox{$\kern\wd\@ne\kern-\@tempdima\left[\kern-\wd\@ne
    \global\setbox\@ne\vbox{\box\@ne\kern2\p@}%
    \vcenter{\kern-\ht\@ne\unvbox\z@\kern-\baselineskip}\,\right]$}%
  \null\;\vbox{\kern\ht\@ne\box\tw@}\endgroup}
\newtheorem*{corollary*}{Corollary}
\newtheorem*{theorem*}{Theorem}
\theoremstyle{definition}
\newtheorem{theorem}{Theorem}
\newtheorem{definition}[theorem]{Definition}
\theoremstyle{remark}
\theoremstyle{remark}
\newtheorem{example}{Example}
\theoremstyle{remark}
\newtheorem*{example*}{Example}
\theoremstyle{remark}
\newtheorem*{remark}{Remark}
\theoremstyle{remark}
\newtheorem{prop}[theorem]{Proposition}
\newcommand{\D}{\mathcal{D}}
\newcommand{\R}{\mathcal{R}}
\def\0{{\bm 0}}   %\0={\bm 0}
\title{Balanced Weighing Matrices}
\author{
 Hadi Kharaghani\thanks{Department of Mathematics and Computer Science, University of Lethbridge,
Lethbridge, Alberta, T1K 3M4, Canada. \texttt{kharaghani@uleth.ca}}
\and
Thomas Pender\thanks{Department of Mathematics and Computer Science, University of Lethbridge,
Lethbridge, Alberta, T1K 3M4, Canada. \texttt{Thomas.pender@uleth.ca}}
\and
  Sho Suda\thanks{Department of Mathematics,  National Defense Academy of Japan, Yokosuka, Kanagawa 239-8686, Japan. \texttt{ssuda@nda.ac.jp}}
}
\date{\today}
\begin{document}
%\begin{titlepage}
\maketitle

\begin{abstract}
A unified approach to the construction of weighing matrices and certain symmetric designs is presented.
Assuming the weight $p$ in a weighing matrix $W(n,p)$ is a prime power, it is shown that there is a $$W\left(\frac{p^{m+1}-1}{p-1}(n-1)+1,p^{m+1}\right)$$ for each positive integer $m$. The case of $n=p+1$ reduces to the balanced weighing matrices with classical parameters $$W\left(\frac{p^{m+2}-1}{p-1},p^{m+1}\right).$$ The equivalence with certain classes of association schemes is discussed in detail.
\end{abstract}

\section{Introduction}

A \emph{weighing matrix} of order $n$ and weight $p$, denoted $W(n,p)$, is a $(0,\pm 1)$-matrix $W$ of order $n$ such that $WW^t = pI_n$. The special cases in which $n=p+1$ is called a \emph{conference} matrix and $n=p$ is a \emph{Hadamard} matrix. A weighing matrix $W(n,p)$ is {said} to be \emph{balanced} if, upon setting each non-zero entry to unity, {it} provides the incidence matrix of a symmetric $(n,p,\lambda)$ balanced incomplete block design. 

Balanced  weighing matrices with {\it classical parameters} include weighing matrices 
$W\left(\frac{p^{m+2}-1}{p-1},p^{m+1}\right)$,  for each positive integer $m$. Upon squaring all the entries it reduces to the symmetric $$BIBD\left(\frac{p^{m+2}-1}{p-1},p^{m+1},p^{m+1}-p^m\right).$$ 

Most of the existing constructive methods for weighing matrices relate to orders which are multiples of four. Indeed,  it is conjectured that there is a $W(4n,k)$  for positive integers $n$ and $1\le k \le 4n$, see \cite{Seb}. In contrast to that case, the progress for the remaining cases of $n\equiv 1,2,3\pmod 4$ have been quite limited. 
 
In this paper,  a new method is presented that is suitable for the remaining three cases. The unifying method of construction establishes a link between the construction and applications of weighing matrices,  and the construction of certain symmetric designs. The outcome is the existence of new weighing matrices,  and a new method of construction for balanced weighing matrices and balanced generalized weighing matrices.

The basic assumption on which most of the results {found} herein depend is that the weight $p$ in the $W(n,p)$ is a prime power. With the restricted value of the weight $p$,  many powerful tools such as balanced generalized weighing  matrices and  orthogonal arrays  will be available and instrumental in the development of  some new techniques presented in this paper. %become available and so thus the method used here very heavily depends on this primality assumption.  

The main references for the paper are \cite{se-yam,Seb}. 
%\end{document}
\section{Preliminaries}
\subsection{BGWs over $\mathbb{Z}_n$}
Let $G$ be a multiplicatively written group, and let $W=[w_{ij}]$ be a ($0,{G}$)-matrix of order $v$. The matrix $W$ is a \emph{balanced generalized weighing matrix} over $G$ with parameters $(v,k,\lambda)$, denoted $BGW(v,k,\lambda;G)$, if each row of $W$ contains exactly $k$ nonzero entries, and if the multiset $\{w_{ik}w_{jk}^{-1}\mid 1\leq k\leq v, w_{ik} \neq 0 \neq w_{j,k}\}$ contains exactly $\lambda/|G|$ copies of every element of $G$, for any distinct $i,j\in\{1,\ldots,v\}$. 

%\textcolor{red}{[Place into example block as below?]}
\begin{example}
{A  balanced weighing matrix of order $v$ and weight $k$ is a \\$BGW(v,k,\lambda;\{1,-1\})$.} 
\end{example}

\begin{example}
Every conference matrix of order $v$ is a $BGW(v,v-1,v-2;\{1,-1\})$. 
\end{example}

\begin{example}
For the prime power $q$, the $BGW(\frac{q^{m+1}-1}{q-1},q^m,q^{m-1}(q-1))$ over the cyclic group $G$ whose order divides $q-1$, for each positive integer $m$, is the largest class of known balanced generalized weighing matrices with the \emph{classical parameters}, see Theorem 10.2.5 of  \cite{ionin-mohan}. It follows that for any odd $q$ there is a balanced weighing matrix with classical parameters.
\end{example}

A weighing matrix $W(n,p)$ is said to be in \emph{normal form}  if  {it} has the block configuration 
$$
\begin{bmatrix} \mathbf{0} & R \\ \mathbf{1} & D  \end{bmatrix},
$$
for some matrices $(0,\pm 1)$-matrices $R$ and $D$, where $\mathbf{0}$ and $\mathbf{1}$  are the $(n-p)\times 1$ column vector with all entries $0$ and the $p\times 1$ column vector of all entries $1$, respectively.  We call $R$ and $D$, respectively, the \emph{residual} and the \emph{derived} parts of the weighing matrix. Note that $RR^t = pI_{n-p}$, and $DD^t = pI_{p} - J_p$, and $RD^t = DR^t = 0$.  {By} permuting and negating some rows, if necessary, every weighing matrix can be assumed to be in normal form. \\

Two well-known results will be used throughout the paper. The first is a result related to the existence of balanced generalized weighing matrices \cite{ionin-mohan}.

\begin{prop}
For every prime power $p$ there is a $BGW(p+1,p,p-1)$ over a cyclic group $\mathbb{Z}_k$ for any $k$ dividing $p-1$.
\end{prop}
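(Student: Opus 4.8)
The plan is to construct the $BGW(p+1,p,p-1)$ explicitly over the multiplicative group $\mathbb{F}_q^{*}$ of the field with $q=p$ elements (a cyclic group of order $p-1$), and then observe that the construction factors through any quotient $\mathbb{Z}_k$ with $k\mid p-1$. First I would index the rows and columns of the matrix by the $p+1$ points of the projective line $\mathrm{PG}(1,p)$, equivalently by $\mathbb{F}_p\cup\{\infty\}$. The natural object to write down is an additive-character-flavoured matrix: put $w_{\infty\infty}=0$, and for $x,y\in\mathbb{F}_p$ set $w_{xy}$ to be the (nonzero) field element $x-y$ when $x\neq y$ and $0$ when $x=y$; finish the border by setting $w_{\infty y}=w_{x\infty}=1$ for $x,y\in\mathbb{F}_p$. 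Each row then has exactly $p$ nonzero entries — the row indexed by $\infty$ misses only the $(\infty,\infty)$ position, and the row indexed by $x\in\mathbb{F}_p$ misses only the diagonal position $(x,x)$ — so the weight condition holds immediately.

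Next I would verify the balance condition by a short case analysis on the pair of distinct row indices $i,j$. If one of them is $\infty$, say $i=\infty$ and $j=x$, then for each column $y\in\mathbb{F}_p\setminus\{x\}$ the quotient $w_{iy}w_{jy}^{-1}=1\cdot(y-x)^{-1}=(y-x)^{-1}$; as $y$ ranges over $\mathbb{F}_p\setminus\{x\}$ this runs exactly once through every element of $\mathbb{F}_p^{*}$, so each group element occurs exactly once, i.e.\ $\lambda/|G|=1$ copy, matching $\lambda=p-1=|G|$. If instead $i=x$ and $j=x'$ with $x,x'\in\mathbb{F}_p$ distinct, then a column $y$ contributes only when $y\neq x$ and $y\neq x'$, giving the quotient $(x-y)(x'-y)^{-1}$; one checks that the map $y\mapsto (x-y)(x'-y)^{-1}$ is a bijection from $\mathbb{F}_p\setminus\{x,x'\}$ onto $\mathbb{F}_p^{*}\setminus\{1\}$ (a Möbius transformation fixing $\infty\mapsto 1$ and sending $x\mapsto 0$, $x'\mapsto\infty$), and the $(\infty,\cdot)$ column contributes the missing value $1$. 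Hence again every element of $\mathbb{F}_p^{*}$ occurs exactly once. This establishes the $BGW(p+1,p,p-1)$ over $\mathbb{Z}_{p-1}$.

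Finally, to get the statement for an arbitrary divisor $k$ of $p-1$, I would apply the quotient homomorphism $\varphi\colon\mathbb{F}_p^{*}\to\mathbb{Z}_k$ entrywise (sending $0\mapsto 0$). Each fibre of $\varphi$ has size $(p-1)/k$, so a multiset in which every element of $\mathbb{F}_p^{*}$ appears exactly once maps to a multiset in which every element of $\mathbb{Z}_k$ appears exactly $(p-1)/k$ times; since $(p-1)/k=\lambda/|\mathbb{Z}_k|$ with $\lambda=p-1$, the balance condition is preserved, and the weight pattern is untouched. Thus the image is a $BGW(p+1,p,p-1;\mathbb{Z}_k)$, as required.

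\emph{Main obstacle.} The only genuinely delicate point is the second balance case: checking that $y\mapsto (x-y)(x'-y)^{-1}$ hits every nonzero element except $1$ exactly once, and that the single $\infty$-column supplies precisely the missing value $1$ (and with the correct sign/orientation of the quotient $w_{iy}w_{jy}^{-1}$ versus $w_{jy}w_{iy}^{-1}$). Everything else is bookkeeping. An alternative, perhaps cleaner, route that avoids even this computation is to quote a known general construction — e.g.\ the classical $BGW\bigl(\frac{q^{m+1}-1}{q-1},q^m,q^{m-1}(q-1)\bigr)$ over any cyclic group whose order divides $q-1$ from Theorem 10.2.5 of \cite{ionin-mohan}, already cited in the excerpt — and simply specialize to $m=1$, $q=p$.
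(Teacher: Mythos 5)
Your construction is correct, but note that the paper does not actually prove this proposition at all: it is stated as a known result and attributed to Ionin--Shrikhande, exactly as in the fallback you mention at the end (Theorem 10.2.5 of \cite{ionin-mohan} with $m=1$). So your explicit argument is a genuinely different, self-contained route. The matrix you write down is the standard ``generalized conference matrix'' over $\mathbb{F}_p^{*}$ indexed by $\mathrm{PG}(1,p)$, and both balance cases check out: for $i=\infty$, $j=x$ the quotients are $w_{\infty y}w_{xy}^{-1}=(x-y)^{-1}$ (you wrote $(y-x)^{-1}$, a harmless sign slip since either expression runs bijectively over $\mathbb{F}_p^{*}$), and the column indexed by $\infty$ is correctly excluded because $w_{\infty\infty}=0$; for $i=x$, $j=x'$ the M\"obius map $y\mapsto(x-y)(x'-y)^{-1}$ is a bijection of $\mathbb{P}^1(\mathbb{F}_p)$ sending $\{x,x',\infty\}$ to $\{0,\infty,1\}$, so it covers $\mathbb{F}_p^{*}\setminus\{1\}$ exactly once and the $\infty$-column supplies the missing $1$. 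The pushforward along a surjection $\mathbb{F}_p^{*}\to\mathbb{Z}_k$ is also handled correctly: fibres of size $(p-1)/k$ turn ``once each over $\mathbb{F}_p^{*}$'' into ``$(p-1)/k=\lambda/k$ times each over $\mathbb{Z}_k$,'' which is precisely the definition used in the paper. One small caution: since $p$ is a prime power rather than a prime, you should say ``the field $\mathbb{F}_p$ with $p$ elements'' consistently (which you essentially do); the whole argument goes through verbatim because $\mathbb{F}_p^{*}$ is cyclic of order $p-1$ for any prime power. What your approach buys is a concrete, verifiable matrix rather than an appeal to a textbook theorem; what the citation buys is brevity and coverage of the much larger classical family of which this is the $m=1$ case.
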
 
%\begin{remark}
%For any prime power $q$ and integer $m>0$, the existence of a $BGW(\frac{q^{m+1}-1}{q-1},q^m,q^{m-1}(q-1))$ over the cyclic group $G$ whose order divides $q-1$ is a well-known fact and will be used in this note. However, all that is needed in this paper is the existence of the seed balanced generalized weighing matrices. The general case is a corollary to the main results of the paper.
%\end{remark}
 
 We introduce the following terminology.
 
\begin{definition}
A weighing matrix $W(p+1,p)$ is called a \emph{seed weighing} matrix, and a $BGW(p+1,p,p-1)$ over a cyclic group whose order divides $p-1$ is called a \emph{seed balanced generalized weighing} matrix.
\end{definition}

The use  {of a particular family of} \emph{orthogonal arrays} {whose existence is predicated on the existence of prime powers} is the key to the method presented in this paper. For details and definitions concerning orthogonal arrays see \cite{h-s-s-oa}.
\begin{prop}\label{oa}
For the prime power $p$ and the positive integer $m$,  there is an array ${\bf O}$ in $p$ symbols of  {dimensions} 
$p^{m+1}\times (\frac{p^{m+1}-1}{p-1})$  
for which any two distinct rows agree in $\frac{p^m-1}{p-1}$ columns. \end{prop}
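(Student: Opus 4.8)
The plan is to build $\mathbf O$ directly from the linear algebra of $\mathbb F_p^{m+1}$: its rows will be indexed by the $p^{m+1}$ vectors of this space and its columns by the $\frac{p^{m+1}-1}{p-1}$ one-dimensional subspaces, with entries coming from a fixed bilinear pairing. Equivalently, $\mathbf O$ is the list of all codewords of the $p$-ary simplex code of dimension $m+1$, and the agreement condition is exactly the statement that every nonzero simplex codeword has constant weight $p^{m}$; I will present the geometric version since it makes the counting transparent.

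Concretely, first I would fix $V=\mathbb F_p^{m+1}$ and choose one nonzero representative $u_1,\dots,u_N$ from each of the $N=\frac{p^{m+1}-1}{p-1}$ one-dimensional subspaces of $V$. Define $\mathbf O$ to be the $p^{m+1}\times N$ array whose $(v,j)$-entry is the scalar $v^{t}u_j\in\mathbb F_p$, the $p$-symbol alphabet being identified with $\mathbb F_p$. Since $u_1,\dots,u_N$ span $V$, the assignment $v\mapsto(v^{t}u_1,\dots,v^{t}u_N)$ is injective, so $\mathbf O$ really does have $p^{m+1}$ distinct rows.

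Next, to count agreements, take two distinct rows $v,w$ and set $z=v-w\neq 0$. The two rows agree in column $j$ exactly when $z^{t}u_j=0$, i.e.\ when the line $\langle u_j\rangle$ is contained in the hyperplane $z^{\perp}$. As $z^{\perp}$ is an $m$-dimensional subspace of $V$, it contains precisely $\frac{p^{m}-1}{p-1}$ one-dimensional subspaces, hence precisely that many of the representatives $u_j$. Thus rows $v$ and $w$ agree in exactly $\frac{p^{m}-1}{p-1}$ columns, and complementarily they disagree in $N-\frac{p^{m}-1}{p-1}=p^{m}$ columns.

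Once this viewpoint is adopted the argument is essentially bookkeeping, so I do not expect a genuine obstacle. The only points deserving a word of care are the standard enumeration of one-dimensional subspaces inside an $m$-dimensional subspace, and the observation that whether $z^{t}u_j$ vanishes is independent of which nonzero representative of the line $\langle u_j\rangle$ was chosen — so the construction is well defined and the agreement count is intrinsic. If one prefers to avoid introducing the pairing, the same array can instead be exhibited as the codeword table of the simplex code and the weight fact quoted from coding theory, but the self-contained count above is just as short.
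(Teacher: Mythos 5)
Your proof is correct. The paper itself offers no argument for this proposition---it simply quotes the existence of the array from the orthogonal-arrays literature (Hedayat--Sloane--Stufken)---so there is no internal proof to compare against; your construction (rows indexed by $\mathbb{F}_p^{m+1}$, columns by the points of $\mathrm{PG}(m,p)$, entries given by the pairing $v^{t}u_j$) is exactly the standard Rao--Hamming/simplex-code construction that underlies the cited result, and your agreement count via the $\frac{p^m-1}{p-1}$ lines inside the hyperplane $z^{\perp}$ is complete and accurate. The only merit worth noting is that your version is self-contained where the paper's is not.
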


\section{Weighing matrices}

The first result relates to weighing matrices $W(n,p)$ for $p$ a prime power.
\begin{theorem}\label{mw}
{If there is a $W(n,p)$, where $p$ is a prime power,} then there is a $$W\left(\frac{p^{m+1}-1}{p-1}(n-1)+1,p^{m+1}\right),$$ for each positive integer $m$.
\end{theorem}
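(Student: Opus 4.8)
The plan is to start from a $W(n,p)$ in normal form $\left[\begin{smallmatrix}\mathbf{0}&R\\\mathbf{1}&D\end{smallmatrix}\right]$, so that $RR^t=pI_{n-1}$, $DD^t=pI_p-J_p$, and $RD^t=0$, and to inflate it using the orthogonal array ${\bf O}$ of Proposition~\ref{oa} together with a seed balanced generalized weighing matrix. Write $N=\frac{p^{m+1}-1}{p-1}$, so ${\bf O}$ is a $p^{m+1}\times N$ array on $p$ symbols with any two distinct rows agreeing in exactly $\frac{p^m-1}{p-1}$ coordinates. Identify the $p$ symbols with the $p$ rows of $D$ (equivalently, with the $p$ nonzero columns' worth of structure in the derived part), and let $B=[b_{ij}]$ be a $BGW(p+1,p,p-1)$ over a cyclic group of order dividing $p-1$; since $p-1$ divides the relevant order we may as well push everything into $\mathbb{Z}_{p-1}$ acting on $\pm1$ via a root of unity — but because the final matrix must be $(0,\pm1)$, the cleaner route is to take $B$ over $\{1,-1\}$ when $p$ is odd, or to observe that for the $\pm1$ reduction we only need a $BGW(p+1,p,p-1;\{1,-1\})$, which is exactly a conference-type object and is supplied by Proposition~3 after applying the sign character. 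I would therefore phrase the construction so that each symbol $s\in\{1,\dots,p\}$ of ${\bf O}$ is replaced by the $s$-th row of $D$, each "empty" slot is replaced by a zero row, and the resulting $p^{m+1}\times N$ blocks are stacked and signed according to $B$.

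The concrete construction: index the rows of the target matrix by a distinguished point together with pairs $(\ell,x)$ where $\ell\in\{1,\dots,N\}$ and $x$ ranges over the $n-1$ rows of $R$ (this accounts for $N(n-1)+1$ rows). For the column structure, replace each row $r$ of ${\bf O}$ (there are $p^{m+1}$ of them) and each column $\ell$ of ${\bf O}$ by a block obtained from the $({\bf O}[r,\ell])$-th row of $D$ tensored appropriately with a column of $R$, with an overall scalar $b_{?\,?}$ coming from the seed $BGW$; the first column/row is handled by the $\mathbf{0}/\mathbf{1}$ border. Then I would compute the Gram matrix of the candidate $W$. The diagonal blocks contribute $p^{m+1}$ because each of the $p^{m+1}$ rows of ${\bf O}$ contributes $p$ from a $D$-row times the $p^m$-fold (really $N$-fold, weighted) replication, plus the border — the arithmetic is arranged precisely so the weight comes out $p^{m+1}$. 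The off-diagonal blocks split into two cases according to whether two target rows share the same ${\bf O}$-column index or not; the "agree in $\frac{p^m-1}{p-1}$ columns" property of ${\bf O}$ is what forces the cross terms among distinct ${\bf O}$-rows to telescope, while the $BGW$ balance kills the residual contributions, and $RD^t=0$, $DD^t=pI-J$ handle the interaction between residual and derived parts. Each of these is a routine but bookkeeping-heavy linear-algebra check.

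I would organize the verification as: (1) fix notation and the explicit block form of the inflated matrix; (2) check that every row has exactly $p^{m+1}$ nonzero entries, all $\pm1$; (3) verify orthogonality of distinct rows by the two-case analysis above, invoking the intersection property of ${\bf O}$ and the defining property of the seed $BGW$; (4) conclude $WW^t=p^{m+1}I$. The main obstacle I anticipate is step~(3), specifically reconciling the group algebra identity coming from the $BGW$ (which a priori lives over $\mathbb{Z}_{p-1}$, not over $\{\pm1\}$) with the need for a genuine $(0,\pm1)$ matrix: one must be careful that the replacement/signing scheme uses only the $\pm1$-valued data, or equivalently that the relevant character sum collapses to an integer. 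A secondary subtlety is making the counting in ${\bf O}$ line up exactly — that two distinct rows agree in $\frac{p^m-1}{p-1} = N - p^m$ columns, so they \emph{disagree} in $p^m$ columns, and it is the disagreement count, combined with $DD^t=pI_p-J_p$, that produces the clean cancellation; getting the constant $N-p^m$ to cancel against the $-J_p$ terms is the crux of the whole argument, and I would present that cancellation carefully rather than leaving it to the reader.
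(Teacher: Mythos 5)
Your handling of the derived part is essentially the paper's: replace the symbols of $\mathbf{O}$ by the rows of $D$, and the crucial cancellation is exactly the one you isolate --- two distinct rows of $\mathbf{O}$ agree in $\frac{p^m-1}{p-1}$ columns (each contributing $p-1$) and disagree in $p^m$ columns (each contributing $-1$), for a total of $-1$, which the bordering all-ones column raises to $0$. (Note, though, that a row of $D$ has $p-1$ nonzero entries, not $p$, so the row weight is $\frac{p^{m+1}-1}{p-1}(p-1)+1=p^{m+1}$, not ``$p$ times a $p^m$-fold replication''; and there are no ``empty slots'' in $\mathbf{O}$ --- the zeros of $\mathcal{D}$ come from the zeros inside the rows of $D$ itself.) The genuine gap is the residual part. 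The paper does not use a $BGW(p+1,p,p-1)$ over a cyclic group of order dividing $p-1$ at all: it takes a genuine $(0,\pm1)$ weighing matrix $\mathcal{W}=W\bigl(\frac{p^{m+1}-1}{p-1},p^m\bigr)$ --- of order $N$, not $p+1$ --- and sets $\mathcal{R}=\mathcal{W}\otimes R$, whence $\mathcal{R}\mathcal{R}^t=(p^mI)\otimes(pI)=p^{m+1}I$ and $\mathcal{D}\mathcal{R}^t=0$ follows blockwise from $DR^t=0$. Your matrix $B$ has the wrong order, your construction carries unresolved scalars (the literal $b_{?\,?}$), and the $\mathbb{Z}_{p-1}$-versus-$\{\pm1\}$ difficulty you identify as the main obstacle simply does not arise once the correct auxiliary object is named; its existence is the classical-parameter balanced weighing matrix, supplied by the known $BGW$ family reduced by the sign character for odd $p$, or by iterating the present theorem from the seed $W(p+1,p)$. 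Moreover, signing the derived blocks by $BGW$ entries, as you propose, is not only unnecessary but would in general destroy the agreement/disagreement cancellation, since the contributions of the agreeing columns would acquire non-constant signs.

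There are also bookkeeping errors that keep the construction from being pinned down. In normal form $R$ is $(n-p)\times(n-1)$ with $RR^t=pI_{n-p}$ (not $pI_{n-1}$), so $\mathcal{R}$ contributes $\frac{p^{m+1}-1}{p-1}(n-p)$ rows and $\mathcal{D}$ contributes $p^{m+1}$ rows, and these add up to $\frac{p^{m+1}-1}{p-1}(n-1)+1$ exactly; your indexing by a distinguished point together with pairs $(\ell,x)$ with $x$ ranging over ``the $n-1$ rows of $R$'' does not describe this row set, and no row of the final matrix involves a row of $D$ ``tensored with a column of $R$.'' Until the residual block and the row/column count are stated correctly, the Gram-matrix verification in your step (3) cannot be carried out.
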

\begin{proof}

Let $$W=\begin{bmatrix} \mathbf{0} & R \\ \mathbf{1} & D  \end{bmatrix}$$ {be the assumed $W(n,p)$.}
Let $\mathcal{W}$ be a $W\left(\frac{p^{m+1}-1}{p-1},p^{m}\right)$,  and consider {the matrix}
$$\mathcal{R}=\mathcal{W}\otimes {R}.$$ 
It is easy to see that the rows of the $(0,\pm 1)$-matrix {$\mathcal{R}$} %$$\left(\frac{p^{m+1}-1}{p-1}\right)(n-p)\times \left(\frac{p^{m+1}-1}{p-1}\right)(n-1)$$ 
are mutually orthogonal. Let $\bf {O}$ be the array in Proposition \ref{oa} and let $\mathcal{D}$ be the matrix obtained from $\bf {O}$ by replacing the $p$ symbols by the $p$ rows of the matrix $\mathbf{D}$ in any order. It follows that $$\mathcal{D}\mathcal{D}^t=p^{m+1}I_{p^{m+1}}-J_{p^{m+1}}.$$
The matrix $$\begin{bmatrix} \mathbf{0} & \mathcal{R} \\ \mathbf{1} & \mathcal {D}  \end{bmatrix}$$
is a $$W\left(\frac{p^{m+1}-1}{p-1}(n-1)+1,p^{m+1}\right)$$ for each positive integer $m$. 
\end{proof}
The existence of $W(43,25)$ is not known. In order to illustrate the construction above a $W(43,25)$ is constructed next.

\begin{example}
We Start with a $W(8,5)$
\[
W=
\left[\begin{array}{c|ccccccc}
1&1&1&1 & 1 & 0&0 &0\\
1&1&-&-  &  0 &1&0&0\\
1&-&1&-  & 0 &0 & 1&0\\
1&-&-&1  &0 &0 &0 &1\\
1 & 0&0 &0 &-&-&-&-\\\hline
0 &1&0&0&-&-&1&1 \\
0 &0 & 1&0&-&1&-&1\\
0 &0 &0 &1&-&1&1&-
\end{array}\right].
\]
Then %Taking the first 5 rows of $H'$, and deleting the the first column, we obtain the matrix
$$
D=
\begin{bmatrix}
1&1&1&1&0&0&0\\
1&-&-&0&1&0&0 \\
-&1&-&0&0&1&0\\
-&-&1&0&0&0&1\\
0&0&0&-&-&-&-
\end{bmatrix},
$$
for which $DD^t = 5I - J$, and

%Similarly, taking the last 3 rows $H'$, after deleting the first column, we have the matrix
$$
R = 
\begin{bmatrix}
1&0&0&-&-&1&1\\
0&1&0&-&1&-&1\\
0&0&1&-&1&1&-
\end{bmatrix},
$$
for which $RR^t = 5I$.

Replacing the five symbols in the $OA(5,2)$ shown in {the} Appendix with  the five rows of $R$ (in any order) the matrix  $\mathcal D$ is obtained. It is seen that $\D\D^t = 25{I}-J$.

If $W$ is any normal $W(6,5)$, then
%\footnotesize{ 
\[
\begin{array}{cc}
\R = W \otimes R=
\begin{bmatrix}
0&1&1&1&1&1\\
1&0&1&-&-&1\\
1&1&0&1&-&-\\
1&-&1&0&1&-\\
1&-&-&1&0&1\\
1&1&-&-&1&0
\end{bmatrix}\otimes R=
&\begin{bmatrix}
0&R&R&R&R&R\\
R&0&R&\bar{R}&\bar{R}&R\\
R&R&0&R&\bar{R}&\bar{R}\\
R&\bar{R}&R&0&R&\bar{R}\\
R&\bar{R}&\bar{R}&R&0&R\\
R&R&\bar{R}&\bar{R}&R&0
\end{bmatrix}\end{array}\]%}
{where $\bar{R}=-R$,}. Then $\R\R^t = 25I$, and
moreover,  $\D\R^t = 0$. 
Consequently, the matrix
$$
X=
\left[\begin{array}{c|c}
\mathbf{1} & \D \\\hline \mathbf{0} & \R
\end{array}\right]
$$
forms a $W(43,25)$, which is shown in the Appendix.

\end{example}

The existence of weighing matrices of small {orders} has been verified through a number of papers. Most of {the} weighing matrices constructed {by} Theorem \ref{mw} are new. Table 1 below shows the orders of the constructed weighing matrices for small order.

We will use the superscripts $(v,k)^\dag$ and $(v,k)^\ddag$ to denote a new order/weight pair and balancedness over $\mathbb{Z}_2$, respectively.
\begin{longtable}[c]{| c | c |}
 \caption{Consequential order/weight pairs for $v \leq 1000$.\label{long}}\\

 \hline
Seed $(v,k)$ & Succident $(v',k')$ \\
\hline
 \endfirsthead

 \multicolumn{2}{c}{\emph{Continuation of Table \ref{long}}}\\
 \hline
 Seed $(v,k)$ & Succident $(v',k')$ \\
 \hline
 \endhead

 \hline
 \endfoot

$(7,4)$ & $(31,16)^\ddag$, $(127, 64)$, $(511, 256)$ \\ \hline
$(6,5)^\ddag$ & $(31, 25)^\ddag$, $(156, 125)^\ddag$, $(781, 625)^\ddag$ \\ \hline
$(8,5)$ & $(43, 25)^\dag$, $(218, 125)$ \\ \hline
$(10,5)$ & $(55, 25)$, $(280, 125)$ \\ \hline
$(12,5)$ & $(67, 25)$, $(342, 125)$ \\ \hline
$(8,7)^\ddag$ & $(57, 49)^\ddag$, $(400, 343)^\ddag$ \\ \hline
$(12,7)$ & $(89, 49)^\dag$, $(628, 343)$ \\ \hline
$(16,7)$ & $(121, 49)$, $(856, 343)$ \\ \hline
$(20,7)$ & $(153, 49)$ \\ \hline
$(10,8)$ & $(82, 64)$, $(658, 512)$ \\ \hline
$(12,8)$ & $(100, 64)$, $(804, 512)$ \\ \hline
$(14,8)$ & $(118, 64)$, $(950, 512)$ \\ \hline
$(16,3)$ & $(69,9)$, $(196,27)$, $(601, 81)$ \\ \hline
$(16,4)$ & $(76, 16)$, $(316, 64)$ \\ \hline
$(16,5)$ & $(91,25)$, $(466,125)$ \\ \hline
$(16,7)$ & $(121, 49)$, $(856, 343)$ \\ \hline
$(16,8)$ & $(136, 64)$ \\ \hline
$(16,11)$ & $(181, 121)^\dag$ \\ \hline
$(16,13)$ & $(211, 169)^\dag$ \\ \hline
$(10,9)^\ddag$ & $(91, 81)^\ddag$, $(820, 729)^\ddag$ \\ \hline
$(12,9)$ & $(111, 81)^\dag$ \\ \hline
$(13,9)$ & $(121,81)^\ddag$ \\ \hline
$(14,9)$ & $(131, 81)$ \\ \hline
$(16,9)$ & $(151, 81)$ \\ \hline
$(14,13)^\ddag$ & $(183, 169)^\ddag$ \\ \hline
$(16,13)$ & $(211, 169)^\dag$ \\ \hline
$(18,13)$ & $(239, 169)^\dag$ \\ \hline
$(19,9)^\ddag$ & $(181, 81)^\ddag$ \\ \hline
$(20,13)$ & $(267, 169)^\dag$
\end{longtable}

\section{An application of weighing matrices to symmetric designs}

\begin{definition} The balanced incomplete block design $BIBD(v,b,r,k,\lambda)$ is said to have a \emph{twin mate}  if the complement design is also a $BIBD(v,b,r,k,\lambda)$.
\end{definition}

\begin{example} The $BIBD(6,10,5,3,2)$ with the incidence matrix

\[
D=\left[
\begin{array}{cccccccccc}
 {1}&0& {1}& {1}& {1}&0&0&0& {1}&0\\
0& {1}&0& {1}& {1}& {1}&0&0&0& {1}\\
 {1}& {1}& {1}&0&0&0& {1}&0&0& {1}\\
 {1}&0&0& {1}&0& {1}& {1}& {1}&0&0\\
0& {1}&0&0& {1}&0& {1}& {1}& {1}&0\\
0&0& {1}&0&0& {1}&0& {1}& {1}& {1}
\end{array}
\right]
%\end{align*}
\] has a twin mate with the incidence matrix
  \[
C=\left[
\begin{array}{cccccccccc}
  {0}&  {1}&  {0}&  {0}&  {0}&  {1}&  {1}&  {1}&  {0}&  {1}\\
  {1}&  {0}&  {1}&  {0}&  {0}&  {0}&  {1}&  {1}&  {1}&  {0}\\
  {0}&  {0}&  {0}&  {1}&  {1}&  {1}&  {0}&  {1}&  {1}&  {0}\\
  {0}&  {1}&  {1}&  {0}&  {1}&  {0}&  {0}&  {0}&  {1}&  {1}\\
  {1}&  {0}&  {1}&  {1}&  {0}&  {1}&  {0}&  {0}&  {0}&  {1}\\
  {1}&  {1}&  {0}&  {1}&  {1}&  {0}&  {1}&  {0}&  {0}&  {0}
\end{array}
\right].\]
\end{example}
The construction method in the following theorem, perhaps surprisingly, is similar to the that of Theorem \ref{mw}. 

\begin{theorem}\label{md}
For the prime power $p$,  the residual design  $$R=BIBD\left(p+1,2p,p,\frac{p+1}{2},\frac{p-1}{2}\right) $$ and the derived design $$D=BIBD\left(p,2p,p-1,\frac{p-1}{2},\frac{p-3}{2}\right) $$ of a symmetric $$SBIBD(2p+1,p,\frac{p-1}{2})$$ are the seed designs  used to construct a symmetric $$SBIBD\left(2p(\frac{p^m-1}{p-1})+1,p^m,p^{m-1}(\frac{p-1}{2})\right)$$
for any positive integer $m$.
\end{theorem}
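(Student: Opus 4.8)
The plan is to run the construction of Theorem~\ref{mw} with its three ingredients transported to the $(0,1)$-setting. Fix a block of the given $SBIBD(2p+1,p,\tfrac{p-1}{2})$ and write its incidence matrix in the normal form $\left[\begin{smallmatrix}\mathbf{0}&R\\ \mathbf{1}&D\end{smallmatrix}\right]$, so that $R$ is the $(p+1)\times 2p$ residual incidence matrix and $D$ the $p\times 2p$ derived one; since $\tfrac{p-1}{2}$ is an integer, $p$ is odd. Expanding the identity $\left[\begin{smallmatrix}\mathbf{0}&R\\ \mathbf{1}&D\end{smallmatrix}\right]\left[\begin{smallmatrix}\mathbf{0}&R\\ \mathbf{1}&D\end{smallmatrix}\right]^{t}=\tfrac{p+1}{2}I+\tfrac{p-1}{2}J$ blockwise yields $RR^{t}=\tfrac{p+1}{2}I+\tfrac{p-1}{2}J$, $DD^{t}=\tfrac{p+1}{2}I+\tfrac{p-3}{2}J$ and $RD^{t}=\tfrac{p-1}{2}J$, and one reads off that $R$ has constant row sum $p$ and column sum $\tfrac{p+1}{2}$ while $D$ has constant row sum $p-1$ and column sum $\tfrac{p-1}{2}$. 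The decisive extra observation is that the residual design has a twin mate: its complement $\bar R:=J-R$ has the same parameters (as $p+1=2\cdot\tfrac{p+1}{2}$ and $2p=2\cdot p$), whence $\bar R\bar R^{t}=RR^{t}$ and $R\bar R^{t}=\bar RR^{t}=pJ-RR^{t}$, and, since the row sum of $D$ equals $p-1=2\cdot\tfrac{p-1}{2}$, also $\bar RD^{t}=(p-1)J-RD^{t}=\tfrac{p-1}{2}J=RD^{t}$.

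For the two auxiliary objects I would take, first, a balanced generalized weighing matrix $\mathcal W=BGW\bigl(\tfrac{p^{m}-1}{p-1},\,p^{m-1},\,p^{m-2}(p-1);\{1,-1\}\bigr)$: it exists for $m\ge 2$ by Example~3 applied with exponent $m-1$ and the cyclic group of order $2$ (legitimate since $2\mid p-1$), and its support is the incidence matrix of an $SBIBD\bigl(\tfrac{p^{m}-1}{p-1},p^{m-1},p^{m-2}(p-1)\bigr)$, so every column of $\mathcal W$ has exactly $p^{m-1}$ nonzero entries. Second, I would take the array $\mathbf O$ of Proposition~\ref{oa} with exponent $m-1$, an array in $p$ symbols of dimensions $p^{m}\times\tfrac{p^{m}-1}{p-1}$ in which any two distinct rows agree in $\tfrac{p^{m-1}-1}{p-1}$ columns, and — as the usual construction via a nonzero linear form on $\mathbb F_{p}^{m}$ permits — chosen so that each of the $p$ symbols occurs $p^{m-1}$ times in each column. (For $m=1$ the construction below merely reproduces the seed design, so assume $m\ge2$.)

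Now form $\R$ by replacing each $1$ in $\mathcal W$ by $R$, each $-1$ by $\bar R$, and each $0$ by the $(p+1)\times 2p$ zero block, and form $\D$ from $\mathbf O$ exactly as in Theorem~\ref{mw} by substituting for each symbol the corresponding row of $D$; then $\R$ is $\tfrac{(p+1)(p^{m}-1)}{p-1}\times 2p\tfrac{p^{m}-1}{p-1}$ and $\D$ is $p^{m}\times 2p\tfrac{p^{m}-1}{p-1}$. The verification hinges on three computations. (i) $\R\R^{t}=p^{m-1}\tfrac{p+1}{2}I+p^{m-1}\tfrac{p-1}{2}J$: the diagonal entries are $p^{m-1}$ times the common row sum $p$ of $R$ and $\bar R$; two rows from the same block-row of $\mathcal W$ contribute $\tfrac{p-1}{2}$ over each of the $p^{m-1}$ columns nonzero in that block-row; and two rows from distinct block-rows contribute, using $\bar R\bar R^{t}=RR^{t}$, $R\bar R^{t}=\bar RR^{t}=pJ-RR^{t}$ and the $\{1,-1\}$-balance of $\mathcal W$ (exactly $p^{m-2}(p-1)/2$ columns of product $+1$ and as many of product $-1$), the constant $\tfrac{p^{m-2}(p-1)}{2}\cdot p=p^{m-1}\tfrac{p-1}{2}$. (ii) $\D\D^{t}=p^{m-1}\tfrac{p+1}{2}I+\bigl(p^{m-1}\tfrac{p-1}{2}-1\bigr)J$, by the count of Theorem~\ref{mw}, using $DD^{t}=\tfrac{p+1}{2}I+\tfrac{p-3}{2}J$ and that two rows of $\mathbf O$ agree in $\tfrac{p^{m-1}-1}{p-1}$ and disagree in $p^{m-1}$ columns. (iii) $\R\D^{t}=p^{m-1}\tfrac{p-1}{2}J$, since over each of the $p^{m-1}$ columns nonzero in a block-row of $\mathcal W$ the contribution is an entry of $RD^{t}$ or of $\bar RD^{t}$, and $RD^{t}=\bar RD^{t}=\tfrac{p-1}{2}J$. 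Putting $\mathcal N=\left[\begin{smallmatrix}\mathbf{0}&\R\\ \mathbf{1}&\D\end{smallmatrix}\right]$, these give
\[
\mathcal N\mathcal N^{t}=\begin{bmatrix}\R\R^{t}&\R\D^{t}\\ \D\R^{t}&J+\D\D^{t}\end{bmatrix}=p^{m-1}\tfrac{p+1}{2}\,I_{v'}+p^{m-1}\tfrac{p-1}{2}\,J_{v'},\qquad v'=2p\tfrac{p^{m}-1}{p-1}+1,
\]
and $\mathcal N$ is a square $(0,1)$-matrix of order $v'$. Using the constant column weight of $\mathcal W$ and the equireplication of $\mathbf O$, every row sum and every column sum of $\mathcal N$ equals $p^{m}$; hence $\mathcal N$ is the incidence matrix of a symmetric $SBIBD\bigl(2p\tfrac{p^{m}-1}{p-1}+1,\,p^{m},\,p^{m-1}\tfrac{p-1}{2}\bigr)$, with $\R$ and $\D$ as its residual and derived parts relative to the block indexed by the first column, matching the seed description.

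The step I expect to be the real obstacle is the last case of computation~(i): it is precisely the twin-mate identities $\bar R\bar R^{t}=RR^{t}$ and $\bar RD^{t}=RD^{t}$ — which hold only because of the special parameter relations of $SBIBD(2p+1,p,\tfrac{p-1}{2})$ — that allow the $\{1,-1\}$-balance of the generalized weighing matrix to collapse every off-diagonal cross term to a single constant, and this is exactly what makes the construction parallel Theorem~\ref{mw}. The remaining points needing care are that the classical $BGW$ of Example~3 may be taken with constant column weight and $\mathbf O$ equireplicate (both true for the standard constructions), together with the bookkeeping of indices in (i) and (iii) where entries of the blow-up of $\mathcal W$ are paired with rows of $D$.
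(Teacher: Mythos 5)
Your proof is correct and follows essentially the same route as the paper's: blow up a balanced weighing matrix with classical parameters by the residual design and its complement (the twin mate), substitute the rows of the derived design into the orthogonal array of Proposition~2, and border with the column $\left[\begin{smallmatrix}\mathbf{0}\\ \mathbf{1}\end{smallmatrix}\right]$ — you have simply supplied the verifications the paper dismisses as ``routine, but tedious.'' The only divergence is one of indexing: the paper's proof takes a balanced $W\bigl(\tfrac{p^{m+1}-1}{p-1},p^{m}\bigr)$ and the array of exponent $m$, so as written it produces the stated parameters with $m$ replaced by $m+1$, whereas your choice of $\mathcal{W}$ and $\mathbf{O}$ matches the theorem's parameters exactly.
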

\begin{proof}
Let $W$ be a balanced $W(\frac{p^{m+1}}{p-1},p^m)$ and consider $\mathcal{R}=W\otimes R$. Let $W=W^+-W^-$, where $W^+$ and $W^-$ are  $(0,1)$-matrices. Let $R'$ be the twin mate to $R$, and consider $\mathcal{R}=W^+\otimes R+W^-\otimes R'$. It is routine, but tedious, to show that $\mathcal{R}$ is a quasi-residual 
$$BIBD\left((p+1)(\frac{p^{m+1}-1}{p-1}), 2p(\frac{p^{m+1}-1}{p-1}),p^{m+1},p^m(\frac{p+1}{2}),p^m(\frac{p-1}{2})\right).$$
Let $\bf {O}$ be the array in Proposition \ref{oa},  and let $\mathcal{D}$ be the matrix obtained from $\bf {O}$ by replacing the $p$ symbols by the $p$ rows of the matrix ${D}$ in any order. %It follows that 
%%$$\mathcal{D}\mathcal{D}^t=p^{m+1}I_{p^{m+1}}-J_{p^{m+1}}.$$
By carefully counting, it follows that $$\mathcal{D}\mathcal{D}^t=\frac{p^m(p+1)}{2}I_{p^{m+1}}+(\frac{p^m(p-1)}{2}-1)J_{p^{m+1}}.$$
The last step is to add the appropriate column  {so that}
\begin{align*}&\begin{bmatrix} \mathbf{0} & \mathcal{R} \\ \mathbf{1} & \mathcal {D}  \end{bmatrix}
\end{align*}
{is the required matrix.}
\end{proof}
\begin{remark}
The construction above {makes} use of the balanced weighing matrix $W$. Although the existence of such a balanced weighing matrix is known, by a method similar to the above construction and the use of the seed $W(p+1,p)$ weighing matrix, a balanced weighing matrix $W(\frac{p^{m+1}}{p-1},p^m)$ is reconstructed here.  The parameters of the design  {are} not new. See \cite{ionin-mohan} in which a generalized Hadamard matrix is used instead of an orthogonal array.
\end{remark}

\section{Balanced Generalized Weighing matrices}

One of the  {most} frequently used tools in the construction of many new symmetric designs  is the classical balanced generalized weighing matrices. We refer the reader to \cite{ionin-mohan} for the most comprehensive coverage of the subject. 

In this section, assuming the existence  {of seed} balanced generalized weighing matrices, the unifying technique used in previous sections is employed to offer a new construction for the classical BGW matrices.

\begin{theorem}\label{bgw}
For the prime power $p$, let $W$  {be a} $BGW(p+1,p,p-1)$ over a cyclic group $G$ of order  $p-1$. Then there is a $$BGW\left(\frac{p^{m+1}-1}{p-1}, p^m,p^{m-1}(p-1)\right)$$ over $G$ for any positive integer $m$.
\end{theorem}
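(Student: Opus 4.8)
The plan is to transplant the construction of Theorem~\ref{mw} into the group-ring setting and iterate it by induction on $m$, so that the only ingredient used is the seed $W$.

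First I would normalise $W$. A $BGW(p+1,p,p-1)$ has exactly one zero in each row; applying the balance condition to a pair of rows shows it also has exactly one zero in each column, so its zero pattern is a permutation matrix, and after permuting and rescaling rows and columns by elements of $G$ we may assume
$$W=\begin{bmatrix}\mathbf 0 & R\\ \mathbf 1 & D\end{bmatrix},$$
where $R$ is a $1\times p$ row all of whose entries lie in $G$, $\mathbf 1$ is the all-$1_G$ column of height $p$, and $D$ is a $p\times p$ matrix over $\{0\}\cup G$ with $p-1$ nonzero entries in each row $D_s$. Applying the seed balance condition to the residual row together with a derived row $D_s$, and to two derived rows $D_s,D_{s'}$ with $s\ne s'$, and peeling off the first column in each case, yields the two facts I will use: $\{R_lD_{sl}^{-1}:D_{sl}\ne 0\}$ is the whole of $G$, each element occurring once; and $\{D_{sl}D_{s'l}^{-1}:D_{sl}\ne 0\ne D_{s'l}\}$ is $G\setminus\{1_G\}$, each remaining element occurring once.

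For $m=1$ the claimed matrix is $W$ itself. For $m\ge 2$, suppose inductively that a $\mathcal W=BGW(\frac{p^m-1}{p-1},p^{m-1},p^{m-2}(p-1))$ over $G$ is available. Form $\mathcal R=\mathcal W\otimes R$, the Kronecker product computed with multiplication in $G$ (a product being $0$ when either factor is $0$); take the orthogonal array $\mathbf O$ of Proposition~\ref{oa} with parameter $m-1$, of size $p^m\times\frac{p^m-1}{p-1}$ and with any two rows agreeing in $\frac{p^{m-1}-1}{p-1}$ columns; and let $\mathcal D$ be obtained from $\mathbf O$ by replacing its $p$ symbols with the $p$ rows of $D$ in any order. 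I claim that
$$\begin{bmatrix}\mathbf 0 & \mathcal R\\ \mathbf 1 & \mathcal D\end{bmatrix}$$
is the desired matrix. Counting rows, columns and nonzeros per row gives order $\frac{p^{m+1}-1}{p-1}$ and weight $p^m$, so only the balance condition remains, which I would check for three kinds of row pairs. For two rows of the $\mathcal R$-block, the $R$-factors cancel in every block, leaving $p$ copies of $\mathcal W_{ij}\mathcal W_{i'j}^{-1}$, and the balance of $\mathcal W$ then gives $p\cdot p^{m-2}=p^{m-1}$ copies of each element of $G$. For two rows of the $\mathcal D$-block, the leading column gives one copy of $1_G$; each of the $\frac{p^{m-1}-1}{p-1}$ columns of $\mathbf O$ on which the two rows agree contributes the $p-1$ copies of $1_G$ coming from a common row of $D$; and each of the remaining $p^{m-1}$ columns, carrying distinct rows $s\ne s'$ of $D$, contributes $G\setminus\{1_G\}$ by the second fact — in total $p^{m-1}$ copies of every element of $G$. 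For a row of the $\mathcal R$-block paired with a row of the $\mathcal D$-block, the $\mathbf 0/\mathbf 1$ column disappears, and each of the $p^{m-1}$ columns $j$ of $\mathcal W$ with $\mathcal W_{ij}\ne 0$ contributes, by the first fact multiplied through by the fixed element $\mathcal W_{ij}$, one copy of each element of $G$, again $p^{m-1}$ copies of everything. Since $p^{m-1}=p^{m-1}(p-1)/|G|$, this completes the induction.

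The delicate point is the $\mathcal D$-block case: the stray copies of $1_G$ — one from the leading column and $p-1$ from each of the $\frac{p^{m-1}-1}{p-1}$ agreeing columns of $\mathbf O$ — must exactly offset the single missing $1_G$ in the contribution of each of the $p^{m-1}$ disagreeing columns, and indeed $1+(p-1)\cdot\frac{p^{m-1}-1}{p-1}=p^{m-1}$. Everything else is the tensor-and-array bookkeeping already met in Theorems~\ref{mw} and~\ref{md}. Alternatively, one could take $\mathcal W$ to be a known classical $BGW$ rather than build it by induction; the inductive form is chosen so that the construction rests on the seed alone.
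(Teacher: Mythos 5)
Your construction is the same as the paper's: put the seed in normal form, build the residual block as a Kronecker product of a smaller balanced generalized weighing matrix with the residual row, build the derived block by substituting the rows of $D$ into the orthogonal array of Proposition~\ref{oa}, and verify balance separately for the three kinds of row pairs. Your explicit induction on $m$ (taking $\mathcal{R}=\mathcal{W}\otimes R$ with $\mathcal{W}$ the previously constructed matrix and the orthogonal array with parameter $m-1$) is in fact a more careful rendering of the paper's argument, whose displayed dimensions carry an index shift and whose residual block $W\otimes\mathbf{1}^t$ only has the correct size when the construction is read recursively, so your bookkeeping is correct and complete.
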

\begin{proof}
Without loss of generality  {the} matrix $W$ is assumed to be in the normal form
$$W=\begin{bmatrix} \mathbf{0} & \mathbf{1}^t \\ \mathbf{1} & \mathcal {D}  \end{bmatrix}.$$
As in previous two cases, let $\bf {O}$ be the array in Proposition \ref{oa}, and let $\mathcal{D}$ be the matrix obtained from $\bf {O}$ by replacing the $p$ symbols by the $p$ rows of the matrix ${D}$ in any order. 
The $(0,G)$-matrix $\mathcal{D}$ {has dimensions}  $p^{m+1}\times p\left(\frac{p^{m+1}-1}{p-1}\right)$ with $p^{m+1}-1$ ones in each row. Noting that any two {distinct} rows of $\bf O$ have  exactly $\frac{p^m-1}{p-1}$ in the same column, there are $(\frac{p^m-1}{p-1})(p-1)=p^m-1$ identity elements of $G$ {upon taking the conjugate inner product between the rows}. The remaining $p^{m+1}$ columns in the two rows contain different rows of $W$ and so there are $p^{m}$ times of each element of the group except for the identity element which has one less. 
 Let 
$\mathcal{R}=W\otimes \mathbf{1}^t$.  Then 
{
$$\begin{bmatrix} \mathbf{0} & \mathcal{R} \\ \mathbf{1} & \mathcal{D}  \end{bmatrix}$$
}
is a  BGW$$\left(\frac{p^{m+1}-1}{p-1}, p^m,p^{m-1}(p-1)\right)$$ over $G$.
\end{proof}
\begin{example}
Let $p=3$ and consider 

\[
W=\begin{bmatrix} 0 & 1 & 1 &1\\1& 0 & 1 & -\\1 & - & 0 & 1\\1 & 1 & - & 0\end{bmatrix}.
\]
By applying Theorem \ref{bgw}, the balanced weighing matrices $$BGW({(3^{m+1}-1)/2},3^m, {2\cdot 3^{m-1}})$$ over $\mathbb{Z}_2$
{are} constructed.
\end{example}
 \begin{remark}
 The above construction is  {further} illuminated in an example for $p=5$ in the Appendix.
\end{remark}

\section{Association schemes}
The main reference for this section is \cite{bannai}.
Let $d$ be a positive integer. 
Let $X$ be a finite set and $R_i$ ($i\in\{0,1,\ldots,d\}$) a nonempty subset of $X\times X$. 

The \emph{adjacency matrix} $A_i$ of the graph with vertex set $X$ and edge set $R_i$ is a $(0,1)$-matrix indexed by the elements of $X$ such that $(A_i)_{xy}=1$ if $(x,y)\in R_i$, and $(A_i)_{xy}=0$ otherwise. 

A \emph{commutative association scheme} with $d$ classes is a pair $(X,\{R_i\}_{i=0}^d)$ satisfying the following:
\begin{enumerate}
\item $A_0=I_{|X|}$.
\item $\sum_{i=0}^d A_i = J_{|X|}$.
\item $A_i^t\in\{A_1,\ldots,A_d\}$ for any $i\in\{1,\ldots,d\}$.
\item For all $i$ and $j$, $A_i A_j$ is a linear combination of $A_0,A_1,\ldots,A_d$.
\item $A_i A_j=A_j A_i$ for all $i,j$. 
\end{enumerate}
We will also refer to $(0,1)$-matrices $A_0,A_1,\ldots,A_d$ satisfying 1-5 as a commutative association scheme. 

If an association scheme satisfies $A_i^t=A_i$,  for any $i\in\{1,\ldots,d\}$, then the association scheme is said to be \emph{symmetric}, in which case it is necessarily commutative.   

The vector space spanned by the collection $\{A_i\}$ forms a commutative algebra, denoted by $\mathcal{A}$, and is called the \emph{Bose-Mesner algebra}.

There exists a basis of $\mathcal{A}$ consisting of primitive idempotents, say, $E_0=(1/|X|)J_{|X|},E_1,\ldots,E_d$. 
Note that the collection $\{E_i\}$ are projections onto maximal common eigenspaces of $A_0,A_1,\ldots,A_d$.
Since  $\{A_0,A_1,\ldots,A_d\}$ and $\{E_0,E_1,\ldots,E_d\}$ are two bases in $\mathcal{A}$, there exist the change-of-bases matrices $P=(p_{ij})_{i,j=0}^d$, $Q=(q_{ij})_{i,j=0}^d$ so that
\begin{align*}
A_j=\sum_{i=0}^d p_{ij}E_i,\quad E_j=\frac{1}{|X|}\sum_{i=0}^d q_{ij}A_i.
\end{align*}
The matrices $P$ and $Q$ are said to be {\it the first and second eigenmatrices}, respectively.

Let $\mathbb{Z}_n=\{\bar{0},\bar{1},\ldots,\overline{n-1}\}$ be an additive cyclic group of order $n$, and let 
$W=(w_{ij})$ be a balanced generalized weighing matrix over $\mathbb{Z}_n$ with parameters $(v,k,\lambda)$.
Define the $v\times v$ $(0,1)$-matrices $B_{i}$ by  
\begin{align*}
W=\sum_{i=0}^{n-1} \bar{i} B_i \in \text{Mat}_v(\mathbb{C}[\mathbb{Z}_n]),
\end{align*}
where $\text{Mat}_v(\mathbb{C}[\mathbb{Z}_n])$ is the matrix algebra of $v \times v$ matrices over the group ring $\mathbb{C}[\mathbb{Z}_n]$.

Let $W^*$ denote the matrix with $(i,j)$-entry given by $-w_{j i}$ if $w_{ij}\in \mathbb{Z}_n$\textcolor{red},  and $0$ if $w_{i j}=0$.
If $W$ is a balanced generalized weighing matrix, then so is $W^*$.  
Thus, we have 
\begin{align*}
\sum_{i,j=0}^{n-1} \overline{i-j}B_{i} B_{j}^t=\sum_{i,j=1}^n \overline{i-j}B_{j}^t B_{i}=k I_{v}+\frac{\lambda}{n}\mathbb{Z}_n(J_v-I_v). 
\end{align*}    
Comparing entries with $\bar{h}$, for each $\bar{h}\in \mathbb{Z}_n$, yields
\begin{align}
\sum_{i=0}^{n-1} B_i B_i^t&=\sum_{i=0}^{n-1} B_i^t B_i=k I_{v}+\frac{\lambda}{n}(J_v-I_v),\label{eq:1}\\
\sum_{i=0}^{n-1} B_i B_{i-h}^t&=\sum_{i=0}^{n-1} B_{i-h}^t B_i=\frac{\lambda}{n}(J_v-I_v) \text{ for }h\neq 0.\label{eq:2}
\end{align}
Conversely, disjoint $(0,1)$-matrices $B_i$ ($i\in\{0,1,\ldots,n-1\}$) satisfying \eqref{eq:1} and \eqref{eq:2} give rise to a $BGW(v,k,\lambda)$ $W=\sum_{i=0}^{n-1} \bar{i} B_i$ over the cyclic group $\mathbb{Z}_n$ of order $n$. 

Let $P=\text{circ}(010\cdots0)$ be a cyclic matrix of order $n$.  
Define the adjacency matrices as follows:
\begin{align*}
A_{0,i}&=\begin{bmatrix}
P^i\otimes I_v&0\\
0& P^i\otimes I_v
\end{bmatrix},\quad (0\leq i \leq n-1) ,\\
A_1&=\begin{bmatrix}
J_n \otimes (J_v-I_v)&0\\
0&J_n \otimes (J_v-I_v)
\end{bmatrix},\\
A_{2,i}&=\begin{bmatrix}
0&\sum_{j=0}^{n-1}P^j\otimes B_{i+j}\\
\sum_{j=0}^{n-1}P^j \otimes B_{-i-j}^t&0
\end{bmatrix},\quad (0\leq i \leq n-1),\\
A_3&=\begin{bmatrix}
0&J_n\otimes(J_v-\sum_{j=0}^{n-1}B_j)\\
J_n\otimes(J_v-\sum_{j=0}^{n-1} B_j^t)&0
\end{bmatrix}.
\end{align*}

Note that when the weight $k$ is equal to the order $d$, namely, $W$ is a generalized Hadamard matrix, the matrix $A_3=0$.

\begin{theorem}\label{thm:ASBGW}
Let $W$ be a $BGW(v,k,\lambda;\mathbb{Z}_n)$. 
%balanced generalized weighing matrix of order $v$ and weight $k$ over $\mathbb{Z}_n$.
Then the following hold:
\begin{enumerate}%[(i)]
\item If $k=v$, then $\{A_{0,i},A_1,A_{2,i} \mid 0\leq i\leq n-1 \}$ is a commutative association scheme.
\item If $k<v$, then $\{A_{0,i},A_1,A_{2,i},A_3 \mid 0\leq i\leq n-1\}$ is a commutative association scheme.
\item The association schemes in $1,2$ are symmetric if and only if $n=2$. 
\end{enumerate}
\end{theorem}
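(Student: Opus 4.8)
The plan is to verify axioms 1--5 for the two families of matrices directly, and then pin down when axiom 3 can be strengthened to symmetry. First I would check that the listed matrices are $(0,1)$-matrices that partition the all-ones matrix: the $A_{0,i}$ and $A_1$ account for the two diagonal blocks (since $\sum_i P^i = J_n$ and $I_v + (J_v - I_v) = J_v$), while the $A_{2,i}$ and $A_3$ account for the two off-diagonal blocks (using that the $B_j$ are disjoint $(0,1)$-matrices, so $\sum_j P^j\otimes B_{i+j}$ over $i$ together with $J_n\otimes(J_v-\sum_j B_j)$ fill the block $J_n\otimes J_v$); this gives axioms 1 and 2, with $A_3$ genuinely needed exactly when $k<v$, matching the two cases. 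The commutativity of the whole algebra (axioms 4 and 5) is where the real work lies, and I expect this to be the main obstacle.

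For the closure and commutativity, the natural move is to compute products block by block. Products among $\{A_{0,i},A_1\}$ live in the algebra generated by $P$ (a commutative algebra, since $P^iP^j=P^{i+j}$) tensored with the span of $I_v$ and $J_v-I_v$, so these close up and commute trivially. The cross terms $A_{0,i}A_{2,j}$, $A_1 A_{2,j}$, $A_{0,i}A_3$, $A_1A_3$ are off-diagonal and reduce to shifting the index in $\sum_j P^j\otimes B_{\bullet}$ or to collapsing a $P$-factor to $J_n$; here the key algebraic facts are $P\cdot\sum_j P^j\otimes B_{i+j} = \sum_j P^j\otimes B_{i+j-1}$-type reindexing and $J_n P^i = J_n$. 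The genuinely substantive computation is the product of two off-diagonal generators, e.g. $A_{2,i}A_{2,j}^{\,}$ and $A_{2,i}A_3$, $A_3^2$: these land back in the diagonal blocks, and expressing them in terms of $A_{0,i}$ and $A_1$ is exactly where identities \eqref{eq:1} and \eqref{eq:2} are invoked. Concretely, a typical diagonal entry of $A_{2,i}A_{2,j}^{\,}$ is (up to the $P$-bookkeeping) $\sum_{s} P^s\otimes\big(\sum_{\ell} B_\ell B_{\ell + (\text{shift})}^t\big)$, and \eqref{eq:1}, \eqref{eq:2} say the inner sum is $kI_v+\frac{\lambda}{n}(J_v-I_v)$ when the shift is $0$ and $\frac{\lambda}{n}(J_v-I_v)$ otherwise — in both cases a combination of $I_v$ and $J_v-I_v$, hence of the $A_{0,\cdot}$ and $A_1$. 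The products $A_{2,i}A_3$ and $A_3^2$ are handled the same way using $(\sum_j B_j)(J_v - \sum_j B_j^t)$ and row-sum counts for the $B_j$. Commutativity then follows because each such product is manifestly symmetric in the roles of the two factors once written in this closed form (the $P$-shifts add commutatively and the $B$-contractions are the same multiset either way); one should also note $A_{2,i}^t = A_{2,-i}$ and $A_3^t = A_3$, so axiom 3 holds, and the algebra is closed under transpose.

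For part 3, observe that the only generators that fail to be symmetric are the $A_{2,i}$ (and $A_3$, but $A_3=A_3^t$ always): $A_{2,i}^t = A_{2,-i}$, so $A_{2,i}$ is symmetric iff $i\equiv -i\pmod n$ for all $i$, which forces $n\le 2$; conversely when $n=2$ we have $-i\equiv i$, the matrices $P^i$ are symmetric, and every listed matrix equals its own transpose, so the scheme is symmetric. Since $n=1$ is degenerate (it would make $W$ an ordinary $0$-$1$ weighing-type matrix with trivial group), the meaningful statement is $n=2$. I would close by remarking that symmetry of a commutative scheme is automatic only in this case, which is consistent with the $\ddag$ entries in Table~\ref{long} corresponding precisely to balancedness over $\mathbb{Z}_2$. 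The one place to be careful is making sure the reindexing of subscripts on $B$ is read modulo $n$ consistently and that the $P$-powers are tracked correctly through each product; this bookkeeping, rather than any deep idea, is the main obstacle.
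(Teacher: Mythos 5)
Your proposal is correct and follows essentially the same route as the paper: verify the partition/transpose axioms directly, compute all pairwise products block by block (with the products $A_{2,i}A_{2,j}$, $A_{2,i}A_3$, $A_3^2$ reduced to the identities \eqref{eq:1} and \eqref{eq:2}), and derive part 3 from $A_{2,i}^t=A_{2,-i}$. The only minor slip is the claim that the $A_{2,i}$ are the only non-symmetric generators (the $A_{0,i}$ also satisfy $A_{0,i}^t=A_{0,-i}$), but this does not affect the argument since the condition on the $A_{2,i}$ alone already forces $n=2$.
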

\begin{proof}
1, 2: 
It is easily checked that $A_{0,0}$ is the identity matrix,  the sum of all adjacency matrices is equal to the all-ones matrix and, $A_{0,i}^t=A_{0,-i}$ for $0\leq i\leq n-1$, $A_1^t=A_1$, $A_{2,i}^t=A_{2,-i}$ and $A_3^t=A_3$, where the second index $-i$ is taken modulo $n$.
Set $\mathcal{A}=\text{span}_{\mathbb{C}}\{A_{0,i},A_1,A_{2,i},A_3 \mid 0\leq i \leq n-1\}$.
It is enough to show that $\mathcal{A}$ is closed under multiplication and is commutative.

For $0\leq i,j\leq n-1$, it is easy to see that 
\begin{align*}
A_{0,i} A_{0,j}&=A_{0,j} A_{0,i}=A_{0,i+j}\in\mathcal{A},\\
%A_{0,i} A_{0,j}&=A_{0,j} A_{0,i}=\begin{bmatrix}
%P^{i+j}\otimes I_d&0\\
%0& P^{i+j}\otimes I_d
%\end{bmatrix}=A_{0,i+j}\in\mathcal{A}.
A_{0,i} A_1&=A_1 A_{0,i},%\begin{bmatrix}
%P^i J_n\otimes (J_d-I_d)&0\\
%0& P^i J_n\otimes (J_d-I_d)
%\end{bmatrix}\\
%&=\begin{bmatrix}
%J_n\otimes (J_d-I_d)&0\\
%0& J_n\otimes (J_d-I_d)
%\end{bmatrix}\\
=A_1\in\mathcal{A}, \\
A_1^2&%&=\begin{bmatrix}
%n J_n\otimes ((d-2)J_d+I_d)&0\\
%0& n J_n\otimes ((d-2)J_d+I_d)
%\end{bmatrix}\\
=n(v-1)\sum_{i=0}^{n-1}A_{0,i} +n(v-2)A_1\in\mathcal{A}.
\end{align*}

For $0\leq i,j\leq n-1$, 
\begin{align*}
A_{0,i} A_{2,j}&=A_{2,j} A_{0,i}=\begin{bmatrix}
0&\sum_{k=0}^{n-1} P^{i+k}\otimes B_{j+k}\\
\sum_{k=0}^{n-1} P^{i+k}\otimes B_{-j-k}^t &0
\end{bmatrix}\\
&=\begin{bmatrix}
0&\sum_{k=0}^{n-1} P^{k}\otimes B_{j-i+k}\\
\sum_{k=0}^{n-1} P^{k}\otimes B_{-j+i-k}^t &0
\end{bmatrix}\\
&=A_{2,-i+j}\in \mathcal{A}. 
\end{align*}

For $0\leq i\leq n-1$, 
\begin{align*}
A_1 A_{2,i}&=A_{2,i}A_1 =\begin{bmatrix}
0&J_n\otimes(k J_v-\sum_{k=0}^{n-1}B_k)\\
J_n\otimes(k J_v-\sum_{k=0}^{n-1}B_k^t)&0
\end{bmatrix}\\
&=(k-1)\sum_{j=0}^{n-1} A_{2,j}+k A_3\in \mathcal{A}.
\end{align*}

For $0\leq i,j \leq n-1$, we use the following equality:
\begin{align*}
\sum_{h,\ell=0}^{n-1}P^{h+\ell}\otimes B_{i+h}B_{-j-\ell}^t
&=\sum_{\alpha,h=0}^{n-1} P^\alpha \otimes B_{i+h}B_{-j-(\alpha-h)}^t \\
&=\sum_{\alpha,\beta=0}^{n-1} P^\alpha \otimes B_\beta B_{-j-(\alpha-(\beta-i))}^t \\
&=\sum_{\alpha,\beta=0}^{n-1} P^\alpha \otimes B_\beta B_{-j-i-\alpha+\beta}^t \\
&=\sum_{\alpha=0}^{n-1} P^\alpha \otimes (\sum_{\beta=0}^{n-1} B_\beta B_{-j-i-\alpha+\beta}^t) \\
&=P^{i+j} \otimes (\sum_{\beta=0}^{n-1} B_\beta B_\beta^t)+\sum_{\substack{0\leq \alpha\leq n-1\\ \alpha\neq i+j}} P^\alpha \otimes (\sum_{\beta=0}^{n-1} B_\beta B_{-j-i-\alpha+\beta}^t) \\
&=P^{i+j} \otimes (k I_v+\frac{\lambda}{n}(J_v-I_v))+\sum_{\substack{0\leq \alpha\leq n-1\\ \alpha\neq i+j}} P^\alpha \otimes \frac{\lambda}{n}(J_v-I_v) \\
&=k P^{i+j} \otimes I_v+\frac{\lambda}{n} \sum_{\alpha=0}^{n-1} P^\alpha \otimes (J_v-I_v) \\
&=k P^{i+j} \otimes I_v+\frac{\lambda}{n} J_n \otimes (J_v-I_v).
\end{align*}
Similarly, we have 
\begin{align*}
\sum_{h,\ell=0}^{n-1}P^{h+\ell}\otimes B_{-i-h}^t B_{j+\ell}
=k P^{i+j} \otimes I_v+\frac{\lambda}{n} J_n \otimes (J_v-I_v).
\end{align*}
Thus, 
\begin{align*}
A_{2,i}A_{2,j}&=A_{2,j}A_{2,i}=\begin{bmatrix}
\sum_{h,\ell=0}^{n-1}P^{h+\ell}\otimes B_{i+h}B_{j-\ell}^t  &0\\
0&\sum_{h,\ell=0}^{n-1}P^{h+\ell}\otimes B_{i-h}^t B_{j+\ell}
\end{bmatrix}\\
&=k\begin{bmatrix}
P^{i+j} \otimes I_v  &0\\
0 & P^{i+j} \otimes I_v
\end{bmatrix}
+\frac{\lambda}{n}\begin{bmatrix}
J_n\otimes (J_v-I_v)  &0\\
0&J_n\otimes (J_v-I_v)
\end{bmatrix}\\
&=k A_{0,i+j}+\frac{\lambda}{n}A_1 \in \mathcal{A}. 
\end{align*}

Finally, we have the following equalities:
\begin{align*}
A&_{0,i}A_3=A_3 A_{0,i}=A_3,\\
A&_1 A_2=A_2A_1%\\
%&=\begin{bmatrix}
%0&n J_n\otimes((v-k-1)J_v+\sum_{j=0}^{n-1}B_j)\\
%n J_n\otimes((v-k-1)J_v+\sum_{j=0}^{n-1}B_j^t)&0
%\end{bmatrix}\\
=(v-k)\sum_{j=0}^{n-1}A_{2,j}+(v-k-1)A_3,\\
A&_{2,i}A_3=A_3A_{2,i}%\\&=\begin{bmatrix}
%(k-\lambda) J_n\otimes(J_v-I_v)&0\\
%0&(k-\lambda) J_n\otimes(J_v-I_v)
%\end{bmatrix}
=(k-\lambda)A_1,\\
A_3A_3&=%\begin{bmatrix}
%nJ_n\otimes((k-\lambda)I_v+(v-2k+\lambda)J_v)&0\\
%0&nJ_n\otimes((k-\lambda)I_v+(v-2k+\lambda)J_v)
%\end{bmatrix}\\
n(v-k)\sum_{j=0}^{n-1} A_{0,j}+n(v-2k+\lambda)A_1,
\end{align*}
each of which belongs to $\mathcal{A}$.
Thus, the vector space $\mathcal{A}$ is closed under multiplication and is commutative.

3: The association scheme is symmetric if and only if  $A_{0,i}$ and $A_{2,i}$ are symmetric, for each $i=1,\ldots,n-1$.
The latter condition is equivalent to $x=-x$ for each $g\in \mathbb{Z}_n\setminus \{\bar{0}\}$, that is $n$ is $2$.
\end{proof}

Set 
\begin{align*}
E_0&=\frac{1}{2nv}(\sum_{j=0}^{n-1}(A_{0,j}+A_{2,j})+A_1+A_3),\displaybreak[0]\\
E_1&=\frac{1}{2nv}(\sum_{j=0}^{n-1}(A_{0,j}-A_{2,j})+A_1-A_3),\displaybreak[0]\\
E_{2,1}&=\frac{1}{2nv}(\sum_{j=0}^{n-1}((v-1)A_{0,j}+\frac{\sqrt{(v-1)(v-k)}}{\sqrt{k}}A_{2,j})-A_1-\frac{\sqrt{(v-1)k}}{\sqrt{v-k}}A_3),\displaybreak[0]\\
E_{2,2}&=\frac{1}{2nv}(\sum_{j=0}^{n-1}((v-1)A_{0,j}-\frac{\sqrt{(v-1)(v-k)}}{\sqrt{k}}A_{2,j})-A_1+\frac{\sqrt{(v-1)k}}{\sqrt{v-k}}A_3),\displaybreak[0]\\
E_{3,i}&=\frac{1}{2nv}\sum_{j=0}^{n-1}w^{ij}(vA_{0,j}+\frac{v}{\sqrt{k}}A_{2,j}),\quad (i\in\{1,\ldots,n-1\}),\\
E_{4,i}&=\frac{1}{2nv}\sum_{j=0}^{n-1}w^{ij}(vA_{0,j}-\frac{v}{\sqrt{k}}A_{2,j}),\quad (i\in\{1,\ldots,n-1\}),
\end{align*}
where $w=e^{2\pi\sqrt{-1}/n}$. 
It readily follows from the intersection numbers of the association schemes that 
\begin{itemize}
\item $E_0,E_1,E_{2,1},E_{2,2},E_{3,i},E_{4,i}$ $(i\in\{1,\ldots,n-1\})$ are the primitive idempotents if $v>k$, and 
\item $E_0,E_1,E_2=E_{2,1}+E_{2,2},E_{3,i},E_{4,i}$ $(i\in\{1,\ldots,n-1\})$ are the primitive idempotents if $v=k$. 
\end{itemize}
Therefore, we have the following. 
\begin{theorem}\label{thm:eigen}
The eigenmatrices $P,Q$ of the association schemes in Theorem~\ref{thm:ASBGW} are given as follows. 
\begin{enumerate}
\item If $k=v$, then 
\begin{align*}
P&=\bbordermatrix{
      & A_{0,i} & A_1 & A_{2,i}   \cr
E_0 & 1 & n(v-1)  & k   \cr
E_1 & 1 & n(v-1) &  -k  \cr
E_2 & 1 & -n &  0  \cr
E_{3,j} & w^{-ij} & 0 &  \sqrt{k}w^{-ij}  \cr
E_{4,j} & w^{-ij} & 0 & -\sqrt{k}w^{-ij}  
},\\
Q&=\bbordermatrix{
      & E_{0} & E_1 & E_2 & E_{3,j} & E_{4,j}  \cr
A_{0,i} & 1 & 1 & 2(v-1)  & v w^{ij} & v w^{ij} \cr
A_1    & 1 & 1 & -2 & 0 & 0 \cr
A_{2,i} & 1 & -1 & 0 & \frac{v}{\sqrt{k}}w^{ij} & -\frac{v}{\sqrt{k}}w^{ij}  
}.
\end{align*}
\item If $k<v$, then 
\begin{align*}
P&=\bbordermatrix{
      & A_{0,i} & A_1 & A_{2,i}  & A_{3}  \cr
E_0 & 1 & n(v-1)  & k & n(v-k) \cr
E_1 & 1 & n(v-1) &  -k & -n(v-k) \cr
E_{2,1} & 1 & -n & \sqrt{\frac{k(v-k)}{v-1}}  & -n\sqrt{\frac{k(v-k)}{v-1}} \cr
E_{2,2} & 1 & -n & -\sqrt{\frac{k(v-k)}{v-1}}  & n\sqrt{\frac{k(v-k)}{v-1}} \cr
E_{3,j} & w^{-ij} & 0 &  \sqrt{k}w^{-ij} & 0 \cr
E_{4,j} & w^{-ij} & 0 & -\sqrt{k}w^{-ij} & 0
},\displaybreak[0]\\
Q&=\bbordermatrix{
      & E_{0} & E_1 & E_{2,1}  & E_{2,2} & E_{3,j} & E_{4,j}  \cr
A_{0,i} & 1 & 1 & v-1 & v-1 & v w^{ij} & v w^{ij} \cr
A_1    & 1 & 1 & -1 & -1 & 0 & 0 \cr
A_{2,i} & 1 & -1 & \sqrt{\frac{(v-1)(v-k)}{k}} & -\sqrt{\frac{(v-1)(v-k)}{k}} & \frac{v}{\sqrt{k}}w^{ij} & -\frac{v}{\sqrt{k}}w^{ij} \cr
A_3    & 1 & -1 & -\sqrt{\frac{(v-1)k}{v-k}} & \sqrt{\frac{(v-1)k}{v-k}} & 0 & 0  
}.
\end{align*}
\end{enumerate}
\end{theorem}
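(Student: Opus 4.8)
The plan is to organize the whole computation around the discrete Fourier transform on $\mathbb{Z}_n$, which simultaneously block-diagonalizes all of the adjacency matrices; this both explains the shape of the claimed idempotents and reduces the eigenvalue computation to two classical pieces. Note that $|X|=2nv$. Let $v_s$ $(s\in\mathbb{Z}_n)$ be an eigenbasis of the circulant $P$ with $P^iv_s=w^{is}v_s$, and apply the corresponding change of basis to the $\mathbb{C}^n$ factor in each of the two block components of the defining matrices. Then each of $A_{0,i},A_1,A_{2,i},A_3$ becomes block diagonal with one $2v\times 2v$ block per $s\in\mathbb{Z}_n$, and the Bose--Mesner algebra $\mathcal{A}$ decomposes accordingly; since the common eigenspaces of the scheme are exactly the common eigenspaces of the blocks, it suffices to diagonalize the $s$-block for each $s$ and reassemble.

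For $s\neq 0$, the $s$-block of $A_{0,i}$ is the scalar $w^{is}I_{2v}$, while $A_1$ and $A_3$ vanish there (each carries a $J_n$ factor and $J_nv_s=0$), and the $s$-block of $A_{2,i}$ is $w^{-is}$ times the antidiagonal matrix with corners $\widehat B(s)$ and $\widehat B(s)^*$, where $\widehat B(s)=\sum_j w^{js}B_j$ is $W$ with each group entry replaced by the corresponding character value. Because $\sum_{g\in\mathbb{Z}_n}\chi(g)=0$ for every nontrivial character $\chi$, relations \eqref{eq:1}--\eqref{eq:2} collapse to $\widehat B(s)\widehat B(s)^*=kI_v$; hence that antidiagonal matrix squares to $kI_{2v}$, has eigenvalues $\pm\sqrt k$ of equal multiplicity $v$, and is independent of $i$. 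Combining, each common eigenspace attached to an $s\neq 0$ has dimension $v$, kills $A_1$ and $A_3$, and carries $A_{0,i}$ and $A_{2,i}$ with eigenvalues $w^{\pm is}$ and $\pm\sqrt k\,w^{\pm is}$ in the pattern recorded by the rows $E_{3,j},E_{4,j}$ of $P$.

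For $s=0$, let $N=\sum_j B_j$ be the $(0,1)$-incidence matrix of the symmetric $(v,k,\lambda)$ design underlying $W$ and $\overline N=J_v-N$ that of its complement; when $k=v$ one has $N=J_v$, $\overline N=0$, and $A_3=0$. The $0$-block of $A_{0,i}$ is $I_{2v}$, of $A_1$ is $n\bigl((J_v-I_v)\oplus(J_v-I_v)\bigr)$, of $A_{2,i}$ is the antidiagonal matrix with corners $N,N^t$ (independent of $i$), and of $A_3$ is $n$ times the antidiagonal matrix with corners $\overline N,\overline N^t$. Together with $I_{2v}$ these span a commutative algebra of dimension $4$ (dimension $3$ if $k=v$), and its primitive idempotents give $E_0,E_1,E_{2,1},E_{2,2}$ (respectively $E_0,E_1,E_2$). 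The eigenvalues are read off from $NN^t=(k-\lambda)I_v+\lambda J_v$ and the bipartite-double structure: the all-ones vector produces $E_0,E_1$ with valencies $1,n(v-1),k,n(v-k)$ and $A_{2,i}\mapsto\pm k$, while on the $(v-1)$-dimensional orthogonal complement the two antidiagonal matrices are simultaneously diagonalizable, and the design identity $(k-\lambda)(v-1)=k(v-k)$ turns their eigenvalue pairs into exactly the radicals $\pm\sqrt{k(v-k)/(v-1)}$ and $\mp n\sqrt{k(v-k)/(v-1)}$ seen in the rows $E_{2,1},E_{2,2}$.

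Assembling the rows gives $P$; for $Q$ I would use the duality $m_j\,\overline{p_{ji}}=v_i\,q_{ij}$ with valencies $v_i\in\{1,n(v-1),k,n(v-k)\}$ and multiplicities $m_j=\operatorname{rank}E_j$ equal to $1,1,v-1,v-1,v,\dots,v$ (respectively $1,1,2(v-1),v,\dots,v$), whose sum $2nv$ serves as a consistency check; equivalently, one verifies $A_iE_j=p_{ji}E_j$ and $E_j=\frac{1}{2nv}\sum_i q_{ij}A_i$ directly from the multiplication table established in the proof of Theorem~\ref{thm:ASBGW}. The step I expect to demand the most care is the $s=0$ block: pinning down the Fourier sign and conjugation conventions so that the $w^{ij}$ versus $w^{-ij}$ placements in $P$ and $Q$ stay mutually consistent, and performing the simultaneous diagonalization of the antidiagonal matrices with corners $(N,N^t)$ and $(\overline N,\overline N^t)$ on the $(v-1)$-dimensional eigenspace so as to extract precisely $\sqrt{k(v-k)/(v-1)}$ and $\sqrt{(v-1)k/(v-k)}$; once the block decomposition is set up, the rest is bookkeeping.
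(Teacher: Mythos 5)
Your argument is correct, but it takes a genuinely different route from the paper. The paper proceeds in the opposite direction: it first writes down the candidate primitive idempotents $E_0,E_1,E_{2,1},E_{2,2},E_{3,i},E_{4,i}$ explicitly as linear combinations of the adjacency matrices (these are exactly $\frac{1}{2nv}\sum_i q_{ij}A_i$ with the $q_{ij}$ of the theorem), asserts that their idempotency and orthogonality ``readily follow from the intersection numbers'' already computed in the proof of Theorem~\ref{thm:ASBGW}, and then reads off $P$ and $Q$. You instead derive the spectral decomposition from scratch: the discrete Fourier transform on $\mathbb{Z}_n$ block-diagonalizes the circulant structure, the nontrivial characters $s\neq 0$ collapse \eqref{eq:1}--\eqref{eq:2} to $\widehat{B}(s)\widehat{B}(s)^{*}=kI_v$ (whence the eigenvalues $\pm\sqrt{k}\,w^{-is}$ and multiplicities $v$), and the trivial character isolates the symmetric $(v,k,\lambda)$ design $N=\sum_j B_j$, whose identity $(k-\lambda)(v-1)=k(v-k)$ produces the radicals in the $E_{2,1},E_{2,2}$ rows; $Q$ then follows from the standard duality $q_{ij}=\frac{m_j}{v_i}\overline{p_{ji}}$, with the multiplicity count $2+2(v-1)+2(n-1)v=2nv$ as a check. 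What the paper's verification buys is brevity, since the full multiplication table of $\mathcal{A}$ is already in hand; what your derivation buys is an explanation of where the entries come from (each $\sqrt{k}$ is a singular value of a character image of $W$, and each $\sqrt{k(v-k)/(v-1)}$ is the non-principal singular value of the underlying design), plus the multiplicities, which the paper never states. The only point to watch is the one you already flag: fixing the conjugation convention in $\widehat{B}(s)^{*}$ and in the duality formula consistently so that $w^{-ij}$ lands in $P$ and $w^{ij}$ in $Q$, and noting that in the degenerate case $k=v$ one has $k-\lambda=0$, so the two $\pm$ eigenspaces on $\mathbf{1}^{\perp}$ merge into the single idempotent $E_2=E_{2,1}+E_{2,2}$ of rank $2(v-1)$.
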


We now show the converse of the theorem, that is, a BGW over a cyclic group can be constructed by the association scheme as follows. 
\begin{theorem}
If there exists a commutative association scheme with the same eigenmatrices in Theorem~\ref{thm:eigen}, then there exists a $BGW(v,k,\lambda)$ over $\mathbb{Z}_n$. 
\end{theorem}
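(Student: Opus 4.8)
The plan is to reverse‑engineer the construction that produced Theorem~\ref{thm:ASBGW}: given a commutative association scheme whose eigenmatrices coincide with the $P,Q$ of Theorem~\ref{thm:eigen}, I would recover the matrices $B_0,\ldots,B_{n-1}$ (equivalently the block $\sum_{j}P^j\otimes B_{i+j}$) from the named adjacency matrices, verify relations \eqref{eq:1} and \eqref{eq:2}, and then invoke the converse statement already recorded in the excerpt (``Conversely, disjoint $(0,1)$-matrices $B_i$ satisfying \eqref{eq:1} and \eqref{eq:2} give rise to a $BGW(v,k,\lambda)$'') to finish. So the real content is: from the abstract combinatorial data (a scheme with a prescribed $P$), identify which adjacency matrices play the roles of $A_{0,i}$, $A_1$, $A_{2,i}$, $A_3$, and extract the $B_i$.

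First I would read off from $P$ the valencies and multiplicities: the classes split, by their first eigenmatrix columns, into a group of $n$ classes behaving like $\{A_{0,i}\}$ (permutation‑like, valency $1$, acting as a regular $\mathbb{Z}_n$ action on the eigenspaces $E_{3,j},E_{4,j}$), one class of valency $n(v-1)$ behaving like $A_1$, $n$ classes of valency $k$ behaving like $\{A_{2,i}\}$, and (when $k<v$) one class $A_3$ of valency $n(v-k)$. Using the $Q$ matrix and the idempotents $E_0,E_1,E_{2,1},E_{2,2},E_{3,j},E_{4,j}$, the vertex set $X$ carries an action making $\bigoplus_j(E_{3,j}\!+\!E_{4,j})$ a faithful $\mathbb{Z}_n$‑module, so $X$ can be identified with a $2$‑to‑$1$ (or $1$‑to‑$1$ when some degeneracy forces it) cover indexed by $\{1,2\}\times\mathbb{Z}_n\times[v]$; concretely $\sum_i A_{0,i}$ together with $A_1$ decomposes $X$ into two ``sides'' of size $nv$, within each side the $A_{0,i}$ realize a free $\mathbb{Z}_n$ action with $v$ orbits, and $A_1$ joins distinct orbits on the same side. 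This is exactly the block structure displayed before Theorem~\ref{thm:ASBGW}, so one defines $B_i$ on $[v]\times[v]$ by declaring $(x,y)\in B_i$ precisely when the unique class among $\{A_{2,t}\}$ connecting the orbit representative of $x$ on side $1$ to that of $y$ on side $2$, combined with the $\mathbb{Z}_n$‑label difference, equals $i$. One then checks that the $B_i$ are disjoint $(0,1)$‑matrices summing (together with $J_v-\sum B_j$, encoded by $A_3$) to $J_v$.

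Next I would translate the multiplication rules of the scheme into \eqref{eq:1}–\eqref{eq:2}. The product $A_{2,i}A_{2,j}=kA_{0,i+j}+\frac{\lambda}{n}A_1$ (the bottom line of the $A_{2}A_{2}$ computation in the proof of Theorem~\ref{thm:ASBGW}), written out block‑diagonally, is literally the statement that $\sum_{\beta}B_\beta B_{\beta-h}^t=kI_v$ when $h=0$ contributes and $\frac{\lambda}{n}(J_v-I_v)$ otherwise — i.e. \eqref{eq:1} and \eqref{eq:2}. Since the hypothesis is that the given scheme has the same eigenmatrices, and the Bose–Mesner algebra is determined up to isomorphism by $P$ (the intersection numbers are polynomial functions of $P$ and $Q=|X|P^{-1}$), the same product formulas hold, and hence the extracted $B_i$ satisfy \eqref{eq:1}–\eqref{eq:2}. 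Applying the ``conversely'' remark then yields $W=\sum_{i=0}^{n-1}\bar i\,B_i$, a $BGW(v,k,\lambda)$ over $\mathbb{Z}_n$.

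The main obstacle I anticipate is the extraction step: passing from a scheme that merely \emph{has the same eigenmatrices} to an actual isomorphic copy of the concrete scheme built from a $BGW$, i.e. recovering the ``side'' bipartition, the free $\mathbb{Z}_n$‑action, and the orbit labelling in a consistent way. Eigenmatrix equality guarantees equality of all intersection numbers, hence an algebra isomorphism of Bose–Mesner algebras, but one must promote this to a combinatorial structure on $X$: one has to argue that $\sum_i A_{0,i}+A_1$ (restricted appropriately) is the adjacency algebra element singling out a bipartition, that each $A_{0,i}$ is a permutation matrix of order dividing $n$ generating a regular cyclic group (this follows from its row sum $1$ and the relation $A_{0,i}A_{0,j}=A_{0,i+j}$, which is forced by the intersection numbers), and that the $A_{2,i}$ refine a single ``bipartite half'' relation consistently with that cyclic action. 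Handling the degenerate case $k=v$ (where $A_3$ is absent and $E_2=E_{2,1}+E_{2,2}$) in parallel, and checking that the $B_i$ are genuinely $(0,1)$ and pairwise disjoint rather than merely satisfying the algebraic identities, are the places where care is needed; everything after that is the bookkeeping already carried out, in the forward direction, in the discussion preceding Theorem~\ref{thm:ASBGW}.
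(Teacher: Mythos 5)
Your proposal follows essentially the same route as the paper: both recover the bipartition from $\sum_i A_{0,i}+A_1$, use the valency-one classes $A_{0,i}$ with $A_{0,i}A_{0,j}=A_{0,i+j}$ to normalize a regular $\mathbb{Z}_n$-action (so that $A_{0,1}$ becomes $P\otimes I_v$ on each side), extract the $B_i$ from the blocks of the $A_{2,j}$ via the intersection-number relations $A_{0,1}A_{2,j}=A_{2,j-1}$, and then read \eqref{eq:1}--\eqref{eq:2} off the product $A_{2,i}A_{2,j}=kA_{0,i+j}+\frac{\lambda}{n}A_1$ before invoking the recorded converse. The "extraction" difficulty you flag is precisely what the paper handles by the eigenvalue/multiplicity computations for $\sum_i A_{0,i}+A_1$ and $\sum_i A_{0,i}$, so your plan matches the paper's argument in substance.
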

\begin{proof}
We deal with 1 and 2 simultaneously. 

Since $\sum_{i=0}^{n-1}A_{0,i}+A_1$ has the eigenvalues $nv,0$ with multiplicities $2,2nv-2$, respectively. 
Therefore, after suitably permuting the vertices, we may assume that 
$$
\sum_{i=0}^{n-1}A_{0,i}+A_1=\begin{bmatrix} J_{nv} & 0 \\ 0 & J_{nv} \end{bmatrix}.   
$$
Similarly $\sum_{i=0}^{n-1}A_{0,i}$ has the eigenvalues $n,0$ with multiplicities $2v,2nv-2v$, respectively. 
Then we may assume that 
$$
\sum_{i=0}^{n-1}A_{0,i}=\begin{bmatrix} I_v\otimes J_n & 0 \\ 0 & I_v\otimes J_n \end{bmatrix}.   
$$

The set $\{A_{0,0},A_{0,1},\ldots,A_{0,n-1}\}$ forms a cyclic group of order $n$ and the matrix $A_{0,1}$ generates the group. 
Then $A_{0,1}$ is of the form 
$$\begin{bmatrix}U_1 & 0 & \cdots & 0 \\ 0 & U_2 & \cdots  & 0 \\ \vdots & \vdots & \ddots  & 0 \\0 & 0 & \cdots  & U_{2v} \end{bmatrix}, $$ where each $U_i$ is a permutation matrix of order $n$. 
Then, after suitably changing the ordering of the vertices, we may assume that $U_i=P:=\text{circ}(010\cdots0)$ for each $i$.  
Therefore $A_{0,1}=\begin{bmatrix} I_v\otimes P&  0 \\ 0 & I_v\otimes P\end{bmatrix}$. 

By considering the map $(x,y)\mapsto (y,x)$,  we have $A_{0,1}=\begin{bmatrix} P\otimes I_v & 0 \\ 0 & P\otimes I_v\end{bmatrix}$.  
Then it is shown that $A_{0,i}$ ($i\in\{0,1,\ldots,n-1\}$) and $A_1$ have the desired form. 

By the intersection numbers, we have $A_{0,1}^{-1}A_{2,0}A_{0,1}=A_{2,0}$, which shows that $A_{2,0}$ is of the desired for some $(0,1)$-matrices $B_0,B_1,\ldots,B_{n-1}$. 
Again, by the intersection numbers, we have $A_{0,1}A_{2,j}=A_{2,j-1}$, which implies that $A_{2,j}$ is of the desired form. 

Finally, by the intersection numbers, $A_{2,i}A_{2,j}=kA_{0,i+j}+\frac{\lambda}{n}A_1$ where $\lambda=\frac{k(k-1)}{v-1}$. 
This shows that \eqref{eq:1} and \eqref{eq:2} hold for $B_0,B_1,\ldots,B_{n-1}$. 
Therefore, $W=\sum_{k=0}^{n-1}\bar{k}B_k$ is a BGW over the cyclic group, where $\bar{k}$ is an element of $\mathbb{Z}_n$. 
\end{proof}

\begin{remark}
 It is important to remember that the results of this section are predicated upon the group in question being cyclic. This leaves out the case of those generalized Hadamard matrices over the elementary abelian groups whose orders are non-prime. Though we do not explicitly show it here, one may replace the circulant matrices $\{P^i \mid 0 \leq i < n\}$ with the collection $\{U_g \mid g \in G\}$, where $G$ is a finite abelian group, and where $U_g$ is a regular linear representation of the group element $g$. This still omits the sporadic cases of BGW matrices over non-abelian groups, but all of the known infinite families are now encapsulated by the result.
\end{remark}

\section*{Acknowledgments.}
Hadi Kharaghani is supported by the Natural Sciences and 
Engineering  Research Council of Canada (NSERC).  Sho Suda is supported by JSPS KAKENHI Grant Number 18K03395.

\begin{rezabib}

\bib{bannai}{book}{
    author = {Eiichi Bannai  and Tatsuro Ito},
     TITLE = {Algebraic combinatorics. {I}},
      NOTE = {Association schemes},
 PUBLISHER = {The Benjamin/Cummings Publishing Co., Inc., Menlo Park, CA},
      YEAR = {1984},
     PAGES = {xxiv+425},
      ISBN = {0-8053-0490-8},
}

\bib{Khar}{article}{
   author={Hadi Kharaghani, },
   title={New class of weighing matrices},
   journal={Ars Combin.},
   volume={19},
   date={1985},
   pages={69--72},
   issn={0381-7032},
%   review={\MR{810264}},
}

\bib{se-yam}{book}{
    author = {Jennifer Seberry  and Mieko Yamada},
     TITLE = {Hadamard matrices, sequences, and block designs},
 BOOKTITLE = {Contemporary design theory},
    SERIES = {Wiley-Intersci. Ser. Discrete Math. Optim.},
     PAGES = {431--560},
 PUBLISHER = {Wiley, New York},
      YEAR = {1992},
      }
\bib{Seb}{book}{
   author={Seberry, Jennifer},
   title={Orthogonal designs},
   note={Hadamard matrices, quadratic forms and algebras;
   Revised and updated edition of the 1979 original [MR0534614]},
   publisher={Springer, Cham},
   date={2017},
   pages={xxiii+453},
   isbn={978-3-319-59031-8},
   isbn={978-3-319-59032-5},
%   review={\MR{3729402}},
   doi={10.1007/978-3-319-59032-5},
}
\bib{h-s-s-oa}{book}{
    author = {A. S. Hedayat and N. J. A. Sloane and John Stufken},
     title = {Orthogonal arrays},
    series= {Springer Series in Statistics},
      note = {Theory and applications,
              With a foreword by C. R. Rao},
 publisher = {Springer-Verlag, New York},
      date = {1999},
     pages = {xxiv+416},
      isbn= {0-387-98766-5},
   }
   
\bib{ionin-mohan}{book}{
    author = {Yury J. Ionin and Mohan S. Shrikhande},
     TITLE = {Combinatorics of symmetric designs},
    SERIES = {New Mathematical Monographs},
    VOLUME = {5},
 PUBLISHER = {Cambridge University Press, Cambridge},
      YEAR = {2006},
     PAGES = {xiv+520},
      ISBN = {978-0-521-81833-9; 0-521-81833-8},
}

\end{rezabib}

\section*{Appendix}

We begin with the orthogonal array of dimension $25 \times 6$ over the alphabet $\{1, \dots, 5\}$ shown transposed below.
\[
\left[
\begin{array}{ccccccccccccccccccccccccc}
1&1&1&1&1&2&2&2&2&2&3&3&3&3&3&4&4&4&4&4&5&5&5&5&5\\
1&2&3&4&5&1&2&3&4&5&1&3&5&2&4&1&4&2&5&3&1&5&4&3&2\\
1&2&3&4&5&2&3&4&5&1&3&5&2&4&1&4&2&5&3&1&5&4&3&2&1\\
1&2&3&4&5&3&4&5&1&2&5&2&4&1&3&2&5&3&1&4&4&3&2&1&5\\
1&2&3&4&5&4&5&1&2&3&2&4&1&3&5&5&3&1&4&2&3&2&1&5&4\\
1&2&3&4&5&5&1&2&3&4&4&1&3&5&2&3&1&4&2&5&2&1&5&4&3\\
\end{array}
\right].
\]
Using this array, we construct a $W(43,25)$ as
\begin{scriptsize}
\[
\arraycolsep=1.0pt\def\arraystretch{1.0}
\left[
\begin{array}{ccccccccccccccccccccccccccccccccccccccccccc}
0&0&0&0&0&0&0&0&0&0&1&-&1&1&-&0&0&1&-&1&1&-&0&0&1&-&1&1&-&0&0&1&-&1&1&-&0&0&1&-&1&1&-\\
0&0&0&0&0&0&0&0&0&1&0&-&-&1&1&0&1&0&-&-&1&1&0&1&0&-&-&1&1&0&1&0&-&-&1&1&0&1&0&-&-&1&1\\
0&0&0&0&0&0&0&0&1&0&0&-&1&-&1&1&0&0&-&1&-&1&1&0&0&-&1&-&1&1&0&0&-&1&-&1&1&0&0&-&1&-&1\\
0&0&0&1&-&1&1&-&0&0&0&0&0&0&0&0&0&1&-&1&1&-&0&0&-&1&-&-&1&0&0&-&1&-&-&1&0&0&1&-&1&1&-\\
0&0&1&0&-&-&1&1&0&0&0&0&0&0&0&0&1&0&-&-&1&1&0&-&0&1&1&-&-&0&-&0&1&1&-&-&0&1&0&-&-&1&1\\
0&1&0&0&-&1&-&1&0&0&0&0&0&0&0&1&0&0&-&1&-&1&-&0&0&1&-&1&-&-&0&0&1&-&1&-&1&0&0&-&1&-&1\\
0&0&0&1&-&1&1&-&0&0&1&-&1&1&-&0&0&0&0&0&0&0&0&0&1&-&1&1&-&0&0&-&1&-&-&1&0&0&-&1&-&-&1\\
0&0&1&0&-&-&1&1&0&1&0&-&-&1&1&0&0&0&0&0&0&0&0&1&0&-&-&1&1&0&-&0&1&1&-&-&0&-&0&1&1&-&-\\
0&1&0&0&-&1&-&1&1&0&0&-&1&-&1&0&0&0&0&0&0&0&1&0&0&-&1&-&1&-&0&0&1&-&1&-&-&0&0&1&-&1&-\\
0&0&0&1&-&1&1&-&0&0&-&1&-&-&1&0&0&1&-&1&1&-&0&0&0&0&0&0&0&0&0&1&-&1&1&-&0&0&-&1&-&-&1\\
0&0&1&0&-&-&1&1&0&-&0&1&1&-&-&0&1&0&-&-&1&1&0&0&0&0&0&0&0&0&1&0&-&-&1&1&0&-&0&1&1&-&-\\
0&1&0&0&-&1&-&1&-&0&0&1&-&1&-&1&0&0&-&1&-&1&0&0&0&0&0&0&0&1&0&0&-&1&-&1&-&0&0&1&-&1&-\\
0&0&0&1&-&1&1&-&0&0&-&1&-&-&1&0&0&-&1&-&-&1&0&0&1&-&1&1&-&0&0&0&0&0&0&0&0&0&1&-&1&1&-\\
0&0&1&0&-&-&1&1&0&-&0&1&1&-&-&0&-&0&1&1&-&-&0&1&0&-&-&1&1&0&0&0&0&0&0&0&0&1&0&-&-&1&1\\
0&1&0&0&-&1&-&1&-&0&0&1&-&1&-&-&0&0&1&-&1&-&1&0&0&-&1&-&1&0&0&0&0&0&0&0&1&0&0&-&1&-&1\\
0&0&0&1&-&1&1&-&0&0&1&-&1&1&-&0&0&-&1&-&-&1&0&0&-&1&-&-&1&0&0&1&-&1&1&-&0&0&0&0&0&0&0\\
0&0&1&0&-&-&1&1&0&1&0&-&-&1&1&0&-&0&1&1&-&-&0&-&0&1&1&-&-&0&1&0&-&-&1&1&0&0&0&0&0&0&0\\
0&1&0&0&-&1&-&1&1&0&0&-&1&-&1&-&0&0&1&-&1&-&-&0&0&1&-&1&-&1&0&0&-&1&-&1&0&0&0&0&0&0&0\\
1&0&0&0&-&-&-&-&0&0&0&-&-&-&-&0&0&0&-&-&-&-&0&0&0&-&-&-&-&0&0&0&-&-&-&-&0&0&0&-&-&-&-\\
1&0&0&0&-&-&-&-&-&-&1&0&0&0&1&-&-&1&0&0&0&1&-&-&1&0&0&0&1&-&-&1&0&0&0&1&-&-&1&0&0&0&1\\
1&0&0&0&-&-&-&-&1&-&-&0&0&1&0&1&-&-&0&0&1&0&1&-&-&0&0&1&0&1&-&-&0&0&1&0&1&-&-&0&0&1&0\\
1&0&0&0&-&-&-&-&-&1&-&0&1&0&0&-&1&-&0&1&0&0&-&1&-&0&1&0&0&-&1&-&0&1&0&0&-&1&-&0&1&0&0\\
1&0&0&0&-&-&-&-&1&1&1&1&0&0&0&1&1&1&1&0&0&0&1&1&1&1&0&0&0&1&1&1&1&0&0&0&1&1&1&1&0&0&0\\
1&-&-&1&0&0&0&1&0&0&0&-&-&-&-&-&-&1&0&0&0&1&1&-&-&0&0&1&0&-&1&-&0&1&0&0&1&1&1&1&0&0&0\\
1&-&-&1&0&0&0&1&-&-&1&0&0&0&1&1&-&-&0&0&1&0&-&1&-&0&1&0&0&1&1&1&1&0&0&0&0&0&0&-&-&-&-\\
1&-&-&1&0&0&0&1&1&-&-&0&0&1&0&-&1&-&0&1&0&0&1&1&1&1&0&0&0&0&0&0&-&-&-&-&-&-&1&0&0&0&1\\
1&-&-&1&0&0&0&1&-&1&-&0&1&0&0&1&1&1&1&0&0&0&0&0&0&-&-&-&-&-&-&1&0&0&0&1&1&-&-&0&0&1&0\\
1&-&-&1&0&0&0&1&1&1&1&1&0&0&0&0&0&0&-&-&-&-&-&-&1&0&0&0&1&1&-&-&0&0&1&0&-&1&-&0&1&0&0\\
1&1&-&-&0&0&1&0&0&0&0&-&-&-&-&1&-&-&0&0&1&0&1&1&1&1&0&0&0&-&-&1&0&0&0&1&-&1&-&0&1&0&0\\
1&1&-&-&0&0&1&0&1&-&-&0&0&1&0&1&1&1&1&0&0&0&-&-&1&0&0&0&1&-&1&-&0&1&0&0&0&0&0&-&-&-&-\\
1&1&-&-&0&0&1&0&1&1&1&1&0&0&0&-&-&1&0&0&0&1&-&1&-&0&1&0&0&0&0&0&-&-&-&-&1&-&-&0&0&1&0\\
1&1&-&-&0&0&1&0&-&-&1&0&0&0&1&-&1&-&0&1&0&0&0&0&0&-&-&-&-&1&-&-&0&0&1&0&1&1&1&1&0&0&0\\
1&1&-&-&0&0&1&0&-&1&-&0&1&0&0&0&0&0&-&-&-&-&1&-&-&0&0&1&0&1&1&1&1&0&0&0&-&-&1&0&0&0&1\\
1&-&1&-&0&1&0&0&0&0&0&-&-&-&-&-&1&-&0&1&0&0&-&-&1&0&0&0&1&1&1&1&1&0&0&0&1&-&-&0&0&1&0\\
1&-&1&-&0&1&0&0&-&1&-&0&1&0&0&-&-&1&0&0&0&1&1&1&1&1&0&0&0&1&-&-&0&0&1&0&0&0&0&-&-&-&-\\
1&-&1&-&0&1&0&0&-&-&1&0&0&0&1&1&1&1&1&0&0&0&1&-&-&0&0&1&0&0&0&0&-&-&-&-&-&1&-&0&1&0&0\\
1&-&1&-&0&1&0&0&1&1&1&1&0&0&0&1&-&-&0&0&1&0&0&0&0&-&-&-&-&-&1&-&0&1&0&0&-&-&1&0&0&0&1\\
1&-&1&-&0&1&0&0&1&-&-&0&0&1&0&0&0&0&-&-&-&-&-&1&-&0&1&0&0&-&-&1&0&0&0&1&1&1&1&1&0&0&0\\
1&1&1&1&1&0&0&0&0&0&0&-&-&-&-&1&1&1&1&0&0&0&-&1&-&0&1&0&0&1&-&-&0&0&1&0&-&-&1&0&0&0&1\\
1&1&1&1&1&0&0&0&1&1&1&1&0&0&0&-&1&-&0&1&0&0&1&-&-&0&0&1&0&-&-&1&0&0&0&1&0&0&0&-&-&-&-\\
1&1&1&1&1&0&0&0&-&1&-&0&1&0&0&1&-&-&0&0&1&0&-&-&1&0&0&0&1&0&0&0&-&-&-&-&1&1&1&1&0&0&0\\
1&1&1&1&1&0&0&0&1&-&-&0&0&1&0&-&-&1&0&0&0&1&0&0&0&-&-&-&-&1&1&1&1&0&0&0&-&1&-&0&1&0&0\\
1&1&1&1&1&0&0&0&-&-&1&0&0&0&1&0&0&0&-&-&-&-&1&1&1&1&0&0&0&-&1&-&0&1&0&0&1&-&-&0&0&1&0\\
\end{array}
\right]
\]
\end{scriptsize}

By a similar process, a $BGW(30,25,20; \mathbb{Z}_4)$ is constructed as
\[
\arraycolsep=1.0pt\def\arraystretch{1.0}
\left[
\begin{array}{ccccccccccccccccccccccccccccccc}
0&0&0&0&0&0&4&4&4&4&4&4&4&4&4&4&4&4&4&4&4&4&4&4&4&4&4&4&4&4&4\\
0&4&4&4&4&4&0&0&0&0&0&3&3&3&3&3&4&4&4&4&4&1&1&1&1&1&2&2&2&2&2\\
0&4&4&4&4&4&1&1&1&1&1&0&0&0&0&0&3&3&3&3&3&2&2&2&2&2&4&4&4&4&4\\
0&4&4&4&4&4&2&2&2&2&2&1&1&1&1&1&0&0&0&0&0&4&4&4&4&4&3&3&3&3&3\\
0&4&4&4&4&4&3&3&3&3&3&4&4&4&4&4&2&2&2&2&2&0&0&0&0&0&1&1&1&1&1\\
0&4&4&4&4&4&4&4&4&4&4&2&2&2&2&2&1&1&1&1&1&3&3&3&3&3&0&0&0&0&0\\
4&0&3&4&1&2&0&3&4&1&2&0&3&4&1&2&0&3&4&1&2&0&3&4&1&2&0&3&4&1&2\\
4&0&3&4&1&2&1&0&3&2&4&1&0&3&2&4&1&0&3&2&4&1&0&3&2&4&1&0&3&2&4\\
4&0&3&4&1&2&2&1&0&4&3&2&1&0&4&3&2&1&0&4&3&2&1&0&4&3&2&1&0&4&3\\
4&0&3&4&1&2&3&4&2&0&1&3&4&2&0&1&3&4&2&0&1&3&4&2&0&1&3&4&2&0&1\\
4&0&3&4&1&2&4&2&1&3&0&4&2&1&3&0&4&2&1&3&0&4&2&1&3&0&4&2&1&3&0\\
4&1&0&3&2&4&0&3&4&1&2&1&0&3&2&4&2&1&0&4&3&3&4&2&0&1&4&2&1&3&0\\
4&1&0&3&2&4&1&0&3&2&4&2&1&0&4&3&3&4&2&0&1&4&2&1&3&0&0&3&4&1&2\\
4&1&0&3&2&4&2&1&0&4&3&3&4&2&0&1&4&2&1&3&0&0&3&4&1&2&1&0&3&2&4\\
4&1&0&3&2&4&3&4&2&0&1&4&2&1&3&0&0&3&4&1&2&1&0&3&2&4&2&1&0&4&3\\
4&1&0&3&2&4&4&2&1&3&0&0&3&4&1&2&1&0&3&2&4&2&1&0&4&3&3&4&2&0&1\\
4&2&1&0&4&3&0&3&4&1&2&2&1&0&4&3&4&2&1&3&0&1&0&3&2&4&3&4&2&0&1\\
4&2&1&0&4&3&2&1&0&4&3&4&2&1&3&0&1&0&3&2&4&3&4&2&0&1&0&3&4&1&2\\
4&2&1&0&4&3&4&2&1&3&0&1&0&3&2&4&3&4&2&0&1&0&3&4&1&2&2&1&0&4&3\\
4&2&1&0&4&3&1&0&3&2&4&3&4&2&0&1&0&3&4&1&2&2&1&0&4&3&4&2&1&3&0\\
4&2&1&0&4&3&3&4&2&0&1&0&3&4&1&2&2&1&0&4&3&4&2&1&3&0&1&0&3&2&4\\
4&3&4&2&0&1&0&3&4&1&2&3&4&2&0&1&1&0&3&2&4&4&2&1&3&0&2&1&0&4&3\\
4&3&4&2&0&1&3&4&2&0&1&1&0&3&2&4&4&2&1&3&0&2&1&0&4&3&0&3&4&1&2\\
4&3&4&2&0&1&1&0&3&2&4&4&2&1&3&0&2&1&0&4&3&0&3&4&1&2&3&4&2&0&1\\
4&3&4&2&0&1&4&2&1&3&0&2&1&0&4&3&0&3&4&1&2&3&4&2&0&1&1&0&3&2&4\\
4&3&4&2&0&1&2&1&0&4&3&0&3&4&1&2&3&4&2&0&1&1&0&3&2&4&4&2&1&3&0\\
4&4&2&1&3&0&0&3&4&1&2&4&2&1&3&0&3&4&2&0&1&2&1&0&4&3&1&0&3&2&4\\
4&4&2&1&3&0&4&2&1&3&0&3&4&2&0&1&2&1&0&4&3&1&0&3&2&4&0&3&4&1&2\\
4&4&2&1&3&0&3&4&2&0&1&2&1&0&4&3&1&0&3&2&4&0&3&4&1&2&4&2&1&3&0\\
4&4&2&1&3&0&2&1&0&4&3&1&0&3&2&4&0&3&4&1&2&4&2&1&3&0&3&4&2&0&1\\
4&4&2&1&3&0&1&0&3&2&4&0&3&4&1&2&4&2&1&3&0&3&4&2&0&1&2&1&0&4&3\\
\end{array}
\right].
\]
\end{document}